\def\ps@pprintTitle{%
 \let\@oddhead\@empty
 \let\@evenhead\@empty
 \def\@oddfoot{}%
 \let\@evenfoot\@oddfoot}
\newcommand\numberthis{\addtocounter{equation}{1}\tag{\theequation}}
\numberwithin{equation}{section}
\theoremstyle{plain}
\newtheorem{thm}{Theorem}[section]
\newtheorem*{thm*}{Theorem}
\newtheorem{lem}[thm]{Lemma}
\newtheorem{cor}{Corollary}
\newtheorem*{cor*}{Corollary}
\theoremstyle{definition}
\theoremstyle{remark}
\newtheorem{rem}{Remark}[section]
\def\i{\iota}
\def\Z{\mathbb Z}
\def\N{\mathbb N}
\def\R{\mathbb R}
\def\C{\mathbb C}
\def\I{{\mathfrak I}}
\def\Im{\operatorname{Im}}
\def\log{\operatorname{log}}
\newcommand{\BigO}[1]{\ensuremath{\operatorname{O}\left(#1\right)}}
\def\pmod#1{\allowbreak\mkern10mu({\operator@font mod}\,\,#1)} 
\begin{document}


\title{Pair correlation of real-valued vector sequences}

\author[mymainaddress]{Sneha Chaubey}
\ead{sneha@iiitd.ac.in}

\author[mymainaddress]{Shivani Goel}
\ead{shivanig@iiitd.ac.in}

\address[mymainaddress]{Department of Mathematics, IIIT Delhi, New Delhi 110020}

\begin{abstract}
In this article, we investigate the fine-scale statistics of real-valued arithmetic sequences. In particular, we focus on real-valued vector sequences and show the Poissonian behavior of the pair correlation function for certain classes of such sequences, thereby extending previous works of Boca et al. and the first author on local statistics of integer-valued and rational-valued vector sequences.
  
\end{abstract}

\begin{keyword}
Pair correlation, Diophantine inequalities, Real-valued vector sequences.
\MSC[2020]  11K06 \sep 11J83 \sep 11K99. 
\end{keyword}
\maketitle
\section{Introduction and Main Results}\label{sec1}
A sequence of real numbers $(x_n)_{n\ge 1}$ is said to be uniformly distributed (equidistributed) modulo one if for all intervals $I\in [0,1)$, we have
\[\frac{1}{N}\#\{n\le N: \{x_n\}\in I\}\to \text{length}(I) \ \ \ \text{as} \ {N\to \infty},\] where $\{x_n\}$ denotes the fractional part of $x_n$. 
We refer \cite{KN74} for the general theory of uniform distribution. One can study other finer distributions of a sequence, that is, distribution properties at the scale $\approx 1/N$. One way to study fine-scale properties of sequences is using the pair correlation statistic. For a real-valued sequence $(x_n)_{n\ge 1}$, any integer $N\ge 1$, and  every fixed interval $I=[-s,s]\subset \R$, the pair correlation function $R_2$ is defined as 
\[R_2(I,(x_n)_{n\ge 1},N)=\frac{1}{N}\#\left\{1\le m\ne n\le N: |x_m-x_n|\in \frac{I}{N}\right\}.\numberthis\label{paircor}\]
 If $R_2(I,(x_n)_{n\ge 1}, N)\to 2s$ as $N\to \infty$, the sequence $(x_n)_{n\ge 1}$ has Poissonian pair correlation (PPC) provided the limit exists. Poissonian pair correlation behavior of a sequence is related to its equidistribution property. A sequence having PPC must be uniformly distributed, but the converse is not necessarily true. See \cite{ALP18, GL17, Mar20}. The concept of PPC originally came from quantum physics. In particular, this is related to the distribution of the discrete energy spectrum of a Hamiltonian operator of a quantum system. We refer \cite{energyspectra} and the references cited therein for more details. Proving the PPC property for deterministic sequences is difficult; thus, there are few results in this direction. For example, in \cite{elbaz}, authors proved that the sequence ${\sqrt{n}}$ has PPC. In \cite{rudsar01}, it has been conjectured 
 that the sequence  $(\{\alpha n^2\})_{n\ge 1}$ will have Poissonian distribution provided $\alpha$ is Diophantine. Partial results in this direction can be found in \cite{Heath,MY, rudsar01, true}.
On the other hand, there are in literature considerable metric results involving averaging over all $\alpha$. The metric theory of pair correlation was studied from a mathematical point of view by Rudnick and Sarnak \cite{rudsar} in the late 20th century. They showed that for almost all $\alpha$, the sequence $({\alpha a(n)})$ modulo 1 has PPC, where $a(n)$ is a polynomial of degree at least two. Since then, several authors have studied the pair correlation property \cite{asit2017,bocarational,chaubey15,rudsar01,rudzah99,rudzah02} for integer or rational-valued sequences.

For real-valued sequences, more needs to be explored. In \cite{rudtec}, authors show that for almost all $\alpha$, $(\{\alpha x_n\})_{n\ge 1}$ ($x_n$ is a real number) has PPC. This involved counting solutions of certain Diophantine inequalities and generalization of the method in \cite{rudzah99}. It was shown that lacunary real-valued sequences have PPC. It turns out that it is difficult to find other number-theoretic sequences satisfying PPC using their hypothesis; as a result, Asitleitner et al. \cite{aistletner21} developed a new method involving the additive energy of sequences. Other than this, some mathematicians studied the pair correlation for the sequence $(\{n^{\theta}\})_{n\ge 1}$ where $\theta\in \R_{>0}$. In \cite{rudsar}, Rudnick and Sarnak proved that for $\theta>2$ and for almost all $\alpha$, the sequence  $(\{\alpha n^{\theta}\})_{n\ge 1}$ has PPC. Recently, in \cite{aistletner21} and \cite{rudtec21} authors extended the result in \cite{rudsar} for $\theta>1$, and for $0<\theta<1$, respectively. In \cite{lutsko2021pair}, authors showed that for $0<\theta<1/3$, the pair correlation of $(\{\alpha n^{\theta}\})_{n\ge 1}$ is Poissonian for all $\alpha.$
 Recently, in \cite{radziwill}, this result has been improved for $0<\theta<1/3+0.0341...$. 
 
The general interest in the distribution of sequences via the pair correlation property, as pointed out in the references cited above, justifies this topic's further study. The principal goal of the present article is to provide new examples of real-valued sequences behaving like that of a random sequence in the setting of vector sequences. To this effect, we consider real-valued vector sequences. For a fixed integer $r\ge 1$, a vector sequence is an injective map $\vec a:\N^r \to \R^r$. For this sequence, an arbitrary but fixed interval $I=[-s,s]$, any integer $N\ge 1$, and $\vec \alpha\in \R^r$, the pair correlation function is defined as
\[R_2(I,\vec a,N)(\vec \alpha)=\frac{1}{N^r}\#\left\{ \vec x\ne \vec y \in B(r,N): |\vec\alpha\cdot \vec a(\vec x)-\vec\alpha\cdot\vec a(\vec y)|\in \frac{I}{N^r}\right\},\numberthis\label{vecpaircor}\]
where $B(r,N)=\Z^r\cap [0,N]^r$. If $R_2(I,\vec a,N)(\vec \alpha)\to 2s$ as $N\to \infty$, the sequence $(\vec{\alpha}.\vec a(  {\vec n}))_{\vec n\in B(r,N)}$ has Poissonian pair correlation. The pair correlation of integer-valued and rational-valued vector sequences has been studied in  \cite{Boca} and \cite{chaubey16}, respectively. In the integer and rational cases, the authors showed that the PPC property could be obtained by counting solutions of Diophantine equations using harmonic analysis. For real vector sequences, we show that the PPC property is equivalent to bounds on the number of solutions of Diophantine inequalities written in \eqref{hyp} by slight modification in the proof of Theorem 2 of  \cite{Boca}. Since in real case, the situation is more delicate and obtaining \eqref{hyp} for sequences is difficult; therefore, we first reduce our problem to counting solutions of additive energy equations given in \eqref{addenergy} and \eqref{gammaenergy} with the help of a twisted moment of the Riemann zeta function. Our main results are as follows.
\begin{thm}\label{thm1}
let  $(\vec a(\vec x_n))_{n\ge 1}=(a^{1}(x_n^{1}),a^{2}(x_n^{2}),\cdots,a^{r}(x_n^{r}))_{n\ge 1}$ be a real-valued vector sequence such that for some $c_i>0$, $a^i(x_{n+1}^{i})-a^i(x_{n}^{i})\ge c_i$ for all $1\le i\le r$ and $n\ge 1$. Let $E_{N}^{r}$ be the number of solutions of the Diophantine inequality  
\[|j_1(\vec a(\vec x_{n_1})-\vec a(\vec x_{n_2}))-j_2(\vec a(\vec x_{n_3})-\vec a(\vec x_{n_4}))|<1,\numberthis\label{hyp}\] such that for sufficiently small $\epsilon>0$, we have $1\le j_1,j_2\le N^{r+\epsilon}$ and $\vec x_{n_1}\ne \vec x_{n_2},\vec x_{n_3}\ne \vec x_{n_4}\in B(r,N).$ If for some $\delta>0$  
\[E_{N}^{r}\ll N^{4r-\delta},\numberthis\label{hyp1}\]
the sequence   $\{\vec {\alpha}\cdot\vec a(\vec x_n)\}_{n\ge 1}$ has PPC for almost all $\vec{\alpha}\in \R^r$.
\end{thm}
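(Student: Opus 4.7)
The plan is to adapt the Fourier-analytic argument of Boca et al.\ (Theorem~2 of \cite{Boca}) to the real-valued setting, bounding the variance of a smoothed pair correlation in $\vec\alpha$ by the count $E_N^r$ of \eqref{hyp} and then invoking Borel--Cantelli along a sparse subsequence.

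First I would replace the sharp indicator $\mathbf{1}_I$ by smooth compactly supported majorants and minorants $f^{\pm}$ with rapidly decaying Fourier transforms, and work with
\[
R_2^{f}(\vec\alpha,N) \;=\; \frac{1}{N^r}\sum_{\vec x\ne \vec y\in B(r,N)} f\bigl(N^{r}(\vec\alpha\cdot\vec a(\vec x)-\vec\alpha\cdot\vec a(\vec y))\bigr).
\]
Applying Fourier inversion $f(u)=\int \hat f(\xi)e(\xi u)\,d\xi$ together with the rapid decay of $\hat f$, I would truncate the $\xi$-integral to $|\xi|\le N^{\epsilon}$, equivalently $|j|:=N^{r}|\xi|\le N^{r+\epsilon}$, incurring an error $O(N^{-A})$ for any $A>0$. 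Squaring $R_2^{f}-\int f$ and integrating $\vec\alpha$ over a box $[-T,T]^r$ produces a product of $\mathrm{sinc}$ factors which, after the change of variables $j_i=N^{r}\xi_i$, concentrate on exactly those configurations $(\vec x_1,\vec x_2,\vec x_3,\vec x_4;j_1,j_2)$ that satisfy the Diophantine inequality \eqref{hyp}. The spacing assumption $a^{i}(x^i_{n+1})-a^i(x^i_n)\ge c_i$ keeps $|\vec a(\vec x)-\vec a(\vec y)|$ bounded away from $0$ for $\vec x\ne \vec y$, which is needed both to extract the mean $\int f^{\pm}$ and to dispose of diagonal contributions.

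Combining these ingredients with the hypothesis $E_N^r\ll N^{4r-\delta}$ yields
\[
\int_{[-T,T]^r}\bigl|R_2^{f^{\pm}}(\vec\alpha,N)-\textstyle\int f^{\pm}\bigr|^2\,d\vec\alpha \;\ll_T\; N^{-\delta+C\epsilon}
\]
for some absolute constant $C$. Choosing $\epsilon<\delta/(2C)$ and a subsequence $(N_k)$ with $\sum_k N_k^{-\delta/2}<\infty$ (e.g.\ $N_k=\lceil k^{2/\delta}\rceil$), Borel--Cantelli gives $R_2^{f^{\pm}}(\vec\alpha,N_k)\to \int f^{\pm}$ for almost every $\vec\alpha\in\R^r$. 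Sandwiching $f^{\pm}\to \mathbf{1}_I$ yields PPC along the subsequence, and the separation condition supplies the Lipschitz-type control needed to interpolate between $N_k$ and $N_{k+1}$.

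The principal technical obstacle lies in the variance step: in the integer case of \cite{Boca} exact orthogonality converts the second moment into a count of Diophantine \emph{equations}, whereas in the real setting one can only obtain sinc-concentration around the Diophantine \emph{inequality} \eqref{hyp}. The delicate modification is therefore calibrating the Fourier truncation level $N^{r+\epsilon}$ against the effective width of the $\mathrm{sinc}$ factors and the tails of $\hat f$ so that every contribution falling outside the configurations counted by $E_N^r$ is absorbed into the $N^{-A}$ error, thereby rendering the hypothesis \eqref{hyp1} directly applicable.
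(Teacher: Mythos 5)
Your high-level outline (smooth approximants, Fourier expansion, second moment, Chebyshev plus Borel--Cantelli) is the right skeleton, but the step you flag as the ``principal technical obstacle'' is exactly where your argument, as written, does not close, and where the paper differs from you in a substantive way. Integrating over a box $[-T,T]^r$ with Lebesgue measure produces $\operatorname{sinc}$ factors whose tails decay only like $|v|^{-1}$, where $v=j_1(\vec a(\vec x)-\vec a(\vec y))-j_2(\vec a(\vec z)-\vec a(\vec w))$. Summing $\min(1,|v|^{-1})$ over all quadruples and $j_1,j_2$ is not controlled by $E_N^r$ alone: the hypothesis in \eqref{hyp1} bounds only the count with $|v|<1$, whereas the slowly decaying tail pulls in contributions from $|v|$ in every dyadic range with weights $2^{-\ell}$, so you would need a whole family of bounds (or an extra diophantine-equidistribution input) that the theorem does not grant. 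Likewise, your majorant $f^\pm$ is compactly supported in physical space, so $\hat f^\pm$ is \emph{not} compactly supported, and the truncation to $|\xi|\le N^\epsilon$ again leaves a tail; the constant $C$ in your $N^{-\delta+C\epsilon}$ is therefore not visibly finite without one more idea.

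The paper sidesteps both tails at once with two clean choices. First, it uses Selberg polynomials $f^{\pm}_{K,s,N^r}$ of degree $K=tN^r$: these are trigonometric polynomials, so $c_j=0$ exactly for $|j|>tN^r$ and there is \emph{no} Fourier truncation error. Second, instead of a box it integrates $\vec\alpha$ over all of $\R^r$ against the Fej\'er-type measure
\[
 d\mu(\vec x)=\prod_{i=1}^r\frac{2\sin^2(x_i/2)}{\pi x_i^2}\,dx_i,
\]
whose Fourier transform is $\prod_i \max(1-|u_i|,0)$, supported in $(-1,1)^r$. Consequently
\[
\int_{\R^r} e\bigl(\vec\alpha\cdot (j_1(\vec a(\vec x)-\vec a(\vec y))-j_2(\vec a(\vec z)-\vec a(\vec w)))\bigr)\,d\mu(\vec\alpha)
\]
vanishes identically unless $\|j_1(\vec a(\vec x)-\vec a(\vec y))-j_2(\vec a(\vec z)-\vec a(\vec w))\|<1$, so the variance collapses exactly to $N^{-4r}\cdot E_N^r\ll N^{-\delta}$ with $|c_j|\le N^{-r}$, and no dyadic tail ever appears. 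If you replace your box with this measure (and your smooth cutoff with a Selberg polynomial, or any $f$ with compactly supported $\hat f$), the rest of your argument goes through essentially as you describe and recovers the paper's proof; without that replacement the ``calibration'' you allude to is a genuine missing step, not a detail.
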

The condition \eqref{hyp1} can be proved for certain sequences, such as the lacunary sequences. To get a wide range of examples, we eliminate the coefficients  $j_1$ and $j_2$ in \eqref{hyp} and obtain $r$ simultaneous conditions in the form of additive energy. 
\begin{thm}\label{thm2}
For $r\ge 2$, let $\{\vec a(\vec x_n)\}_{n\ge 1} =(a^{1}(x_n^{1}),a^{2}(x_n^{2}),\cdots,a^{r}(x_n^{r}))_{n\ge 1} $ be a real valued vector sequence such that for some $c_i>0$, \[a^i(x_{n+1}^{i})-a^i(x_{n}^{i})\ge c_i\numberthis\label{growthcon}\] for all $1\le i\le r$ and $n\ge 1$. For every $1\le i\le r$, let $E_{N}^{(i)*}$ be the number of solutions $(x^i_{n_1},x^i_{n_2},x^i_{n_3},x^i_{n_4})$ of the inequality 
\[|x^i_{n_1}-x^i_{n_2}+x^i_{n_3}-x^i_{n_4}|<1,\numberthis\label{addenergy}\]
where ${n_1},{n_2},{n_3},{n_4}\le N$. If there exists some $\delta>0$, such that for each  $1\le i\le r$, $E_{N}^{(i)*}\ll N^{\frac{280-136/r}{89}-\delta}$ as $N\to \infty $, then the sequence   $\{\vec {\alpha}\cdot\vec a(\vec x_n)\}_{n\ge 1}$ has PPC for almost all $\vec{\alpha}\in \R^r$.
\end{thm}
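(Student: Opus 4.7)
The plan is to deduce Theorem~\ref{thm2} from Theorem~\ref{thm1} by verifying the hypothesis \eqref{hyp1}, with the per-coordinate additive-energy bounds doing the heavy lifting. The starting observation is that the scalar inequality \eqref{hyp} carries two coefficients $j_1,j_2 \in [1,N^{r+\epsilon}]$ that must be removed before the separated, coordinate-wise structure of $\vec a = (a^1,\ldots,a^r)$ can be exploited. To decouple the $j$-weights from the coordinate differences, I would introduce a smooth majorant $\Phi \geq \mathbf{1}_{[-1,1]}$ with rapidly decaying Fourier transform $\widehat\Phi$ of restricted support, and write
\[ \mathbf{1}_{\{|\xi|<1\}} \le \Phi(\xi) = \int \widehat\Phi(t)\, e^{2\pi i t\xi}\, dt. \]
Inserting this into the count $E_N^r$ converts the weighted inequality into an oscillatory integral in $t$ whose inner sum over $j_1,j_2$ is a short Dirichlet polynomial.

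Controlling this double $j$-sum is where the twisted fourth-moment of the Riemann zeta function enters, as advertised in the introduction; schematically, one bounds an average of the form
\[ \int |\widehat\Phi(t)| \, \Bigl| \sum_{j\le J} e^{2\pi i t j D} \Bigr|^2 \, dt \]
where $D$ ranges over the real-valued differences $a^i(x^i_{n_1})-a^i(x^i_{n_2})$. The net effect of this hybrid mean-value estimate is to transfer the weighted near-coincidence condition into a condition counting near-coincidences of the differences themselves, which is precisely the content of \eqref{addenergy}.

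Once the $j$-coefficients are averaged out, the remaining object factors across the $r$ coordinates because $a^i$ depends only on $x^i$. The growth condition \eqref{growthcon} supplies the side bound that at most $O_\epsilon(N^\epsilon)$ values of $x^i$ can have image in any unit interval, and this lets one dominate the per-coordinate count by $E_N^{(i)*}$. Multiplying the $r$ per-coordinate bounds and accounting for the short $j$-factor yields
\[ E_N^r \ll N^{O(\epsilon)} \prod_{i=1}^{r} E_N^{(i)*} \ll N^{(280r-136)/89 + O(\epsilon) - r\delta}. \]
Since
\[ 4r - \frac{280r-136}{89} = \frac{76r+136}{89} > 0 \]
uniformly in $r\ge 1$, choosing $\epsilon$ small in terms of $r$ and $\delta$ delivers $E_N^r \ll N^{4r-\delta'}$ for some $\delta'>0$, which is exactly \eqref{hyp1}. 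An application of Theorem~\ref{thm1} then yields PPC for almost every $\vec\alpha\in\R^r$.

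The main obstacle is the decoupling step: removing the weights $j_1,j_2$ with a saving strong enough that the loss fits inside the slack $(76r+136)/89$ between $4r$ and $(280r-136)/89$. This is a question about how sharply the relevant hybrid moment of $\zeta$ (and the associated $\Gamma$-type weighted energy count) can be controlled, and getting the dyadic decomposition of the $j$-range and the choice of majorant $\Phi$ tight enough is the technical core of the argument; every other step (coordinate factorization, translating the growth assumption into a local density bound, and assembling the product bound) is essentially bookkeeping once this reduction is in hand.
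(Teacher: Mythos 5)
Your high-level plan — use the $E_N^{(i)*}$ bounds after decoupling the multipliers $j_1,j_2$ — is thematically in line with what the paper is doing, and your arithmetic check that $(280r-136)/89 < 4r$ is correct. But the one step doing all the real work, the claimed inequality
\[
E_N^r \ll N^{O(\epsilon)} \prod_{i=1}^r E_N^{(i)*},
\]
is not established and cannot hold in the generality you need. The constraint $\|j_1(\vec a(\vec x)-\vec a(\vec y)) - j_2(\vec a(\vec z)-\vec a(\vec w))\| < 1$ imposes the \emph{same} pair $(j_1,j_2)$ across all $r$ coordinates, so no coordinate-wise factorization eliminates the $j$-multiplicity. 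For a pair with $\min_i\{\rho_i(u_i),\rho_i(v_i)\}$ small (which the growth condition $\rho_i > c_i$ does not forbid, and which occurs for $\Theta(N)$ pairs per coordinate), the number of admissible $(j_1,j_2)$ with $j_1,j_2 \le N^{r+\epsilon}$ can be polynomially large in $N$, not $N^{O(\epsilon)}$. Your growth-condition observation ("at most $O_\epsilon(N^\epsilon)$ values of $x^i$ have image in any unit interval") controls the count of near-coincidences of the differences themselves, but says nothing about how many $(j_1,j_2)$ produce a given dilation ratio. If your factorization were true, Theorem~\ref{thm2} would hold under the much weaker hypothesis $E_N^{(i)*} \ll N^{4-\delta}$; the fact that the paper requires the stronger threshold $N^{(280-136/r)/89-\delta}$ (roughly $N^{2.38}$ for $r=2$) is precisely the cost of decoupling the $j$'s, and your route would need to prove something substantially sharper than what the paper itself achieves.

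There is also a structural mismatch: the paper does \emph{not} prove Theorem~\ref{thm2} by verifying hypothesis \eqref{hyp1} and invoking Theorem~\ref{thm1}. Instead, it controls the variance $Var(h_{N^r},\mu)$ directly via Lemma~\ref{lemma1}, whose proof runs a dyadic decomposition in $j$, a three-way case split on $\min_i\{\rho_i(u_i),\rho_i(v_i)\}$, the auxiliary quantities $b(k_i),a(h_i),P(s_i)$, Tenenbaum's averaging formula (Lemma~\ref{averagelem}), and Ivi\'c's $\tfrac{178}{13}$-th moment bound for $\zeta$ on the critical line; the resulting estimate is $N^{(58+115r)/89+\epsilon}(E_N^*)^{76r/89}$ in the main case, with the exponent on $E_N^*$ interpolating between $76r/89$ and $r$ across the cases. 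That varying exponent, together with the case-dependent power of $N$, is exactly what produces the threshold $(280-136/r)/89$. Your "essentially bookkeeping" assessment of everything beyond the reduction understates where the loss actually accrues: the slack you must fit inside is not $(76r+136)/89$ against $4r$, but the much thinner margin dictated by the exponents coming out of the H\"older/Ivi\'c step. To repair your argument you would need to reproduce a quantitative bound of the form "$j$-multiplicity averaged against the energy gives at most $N^{4r}(E_N^*)^{c r} N^{-d}$" with exponents matching Lemma~\ref{lemma1}, which is the actual technical content of Section~\ref{variance}.
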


The above theorem yields further examples of real-valued vector sequences having PPC property. 
\begin{cor}\label{cor1}
Assume that $r\ge 2$ and for each $1\le i\le r$, let $\{\vec a(\vec x_n)\}_{n\ge 1} =(a^{1}(x_n^{1}),a^{2}(x_n^{2}),\cdots,a^{r}(x_n^{r}))_{n\ge 1} $ be a real-valued vector sequence such that  for each $1\le i\le r$, either $a^{i}(x_n^{i})$ is lacunary or a quadratic polynomial with real coefficients. Then for almost all $\vec{\alpha}\in \R^r$, the sequence$\{\vec {\alpha}\cdot\vec a(\vec x_n)\}_{n\ge 1}$ has PPC. Also, for $r\ge 3$, for each $1\le i\le r$, if $a^{i}(x_n^{i})$ is lacunary, a quadratic polynomial with real coefficients or a convex sequence, then $\{\vec {\alpha}\cdot\vec a(\vec x_n)\}_{n\ge 1}$ has PPC for almost all $\vec{\alpha}\in \R^r$.
\end{cor}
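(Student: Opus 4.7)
The plan is to apply Theorem \ref{thm2} componentwise: we just need to verify, for each of the three allowed sequence types (lacunary, real quadratic polynomial, convex), first that the growth condition \eqref{growthcon} holds, and second that the additive energy count $E_N^{(i)*}$ satisfies $E_N^{(i)*}\ll N^{(280-136/r)/89-\delta}$ for some $\delta>0$. Let $\tau_r:=(280-136/r)/89$; note $\tau_2=212/89\approx 2.382$ while $\tau_3=704/267\approx 2.637$ and $\tau_r\to 280/89\approx 3.146$. The case split in the statement is driven precisely by where the known additive-energy bound for convex sequences fits relative to $\tau_r$.

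The growth condition is straightforward in all three cases. For a lacunary sequence with ratio $q>1$, the differences grow geometrically and are bounded below by a positive constant. For a real quadratic $a^i(x)=A_i x^2+B_i x+C_i$ with $A_i\neq 0$ (which we may assume WLOG, shifting by a constant if necessary), the consecutive differences are an arithmetic progression with common difference $2A_i$, so after finitely many terms they exceed any fixed positive constant; the first finitely many terms can be absorbed into the constant $c_i$. For a convex sequence, consecutive differences are nondecreasing, so $a^i(x_{n+1}^i)-a^i(x_n^i)\ge a^i(x_2^i)-a^i(x_1^i)=:c_i$.

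The additive energy estimates are the substance of the argument. For lacunary $(a^i(x_n^i))$, the inequality $|a^i_{n_1}-a^i_{n_2}+a^i_{n_3}-a^i_{n_4}|<1$ forces a near-pairing of the indices by the classical geometric-series argument, giving $E_N^{(i)*}\ll N^2\log N$, comfortably below $\tau_r$ for every $r\ge 2$. For quadratic $a^i$, expanding the inequality reduces counting to a Diophantine inequality in the integer variables; a standard level-set/divisor estimate (as used, e.g., in the circle-method treatment of $\alpha n^2$) gives $E_N^{(i)*}\ll N^{2+\epsilon}$, again well below $\tau_r$. For convex $a^i$, I would invoke the Schoen--Shkredov bound $E(A)\ll |A|^{32/13+\epsilon}$ for convex sets (adapted via a dyadic decomposition to the $<1$ inequality instead of equality); since $32/13\approx 2.462$, this beats $\tau_r$ for $r\ge 3$ but \emph{fails} to beat $\tau_2=212/89\approx 2.382$, which is exactly why convex sequences are excluded in the $r=2$ part of the statement.

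The main obstacle is the transition from equation-form additive energy bounds (which are what the additive-combinatorics literature states, counting $a+b=c+d$ in integer or integer-index settings) to the inequality form $|a+b-c-d|<1$ over real-valued sequences with integer indices. For lacunary and quadratic sequences this is a mild perturbation handled by grouping the possible near-integer values of the sum and applying a level-set bound, but for convex sequences one must be careful: the convex-set energy bound rests on incidence geometry, and the inequality relaxation needs a dyadic pigeonhole together with translation invariance of the convex-energy estimate to survive the loss. Once the three estimates are in hand, the corollary follows by applying Theorem \ref{thm2} directly.
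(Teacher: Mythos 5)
Your proposal matches the paper's proof of Corollary~\ref{cor1}: both apply Theorem~\ref{thm2} component-by-component, using the bounds $E_N^{(i)*}\ll N^{2+\epsilon}$ for lacunary sequences (Rudnick--Technau) and real-quadratic sequences (Bourgain--Kontorovich--Walsh), and $E_N^{(i)*}\ll N^{2.46+\epsilon}$ for convex sequences (Shkredov), with the $r\ge 3$ restriction for convex sequences arising exactly because the exponent $\approx 2.46$ sits strictly between your $\tau_2\approx 2.382$ and $\tau_3\approx 2.637$. You are somewhat more careful than the paper: you explicitly check the growth condition~\eqref{growthcon} case by case, and you flag the genuinely nontrivial step of upgrading equation-form additive-energy estimates from the additive-combinatorics literature to the inequality-form counts $|x^i_{n_1}-x^i_{n_2}+x^i_{n_3}-x^i_{n_4}|<1$ needed here, a point the paper's proof handles silently by citation.
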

To prove Theorem \ref{thm2}, we show that the variance tends to zero, which we define later in section \ref{sec2}. We use the solutions of \eqref{addenergy} for the same. In Theorem \ref{thm2}, all solutions of \eqref{addenergy} do not contribute equally in estimating the variance. We can see this from the three different cases in section \ref{variance}. While instead of \eqref{addenergy}, if we consider \[|x^i_{n_1}-x^i_{n_2}+x^i_{n_3}-x^i_{n_4}|<\gamma \numberthis\label{gammacon}\]for some $\gamma\in (0,1]$ , then the smaller value of $\gamma$ gives a stronger effect on the variance. More precisely, here it is expected that number of solutions of \eqref{gammacon} is approximately equal to $\gamma$ times the number of solutions of \eqref{addenergy}. This is evident from the proof of Theorem \ref{thm2} in Section \ref{variance}. Hence, we replace condition \eqref{addenergy} with \eqref{gammacon} and obtain the following.
\begin{thm}\label{thm3}
For $r\ge 2$, let $(\vec a(\vec x_n))_{n\ge 1}=(a^{1}(x_n^{1}),a^{2}(x_n^{2}),\cdots,a^{r}(x_n^{r}))_{n\ge 1}$ be a real valued vector sequence such that for some $c_i>0$, $a^i(x_{n+1}^{i})-a^i(x_{n}^{i})\ge c_i$ for all $1\le i\le r$ and $n\ge 1$. Let $E_{N,\gamma}^{(i)*}$ be the number of solutions $(x^i_{n_1},x^i_{n_2},x^i_{n_3},x^i_{n_4})$ of the inequality \[|x^i_{n_1}-x^i_{n_2}+x^i_{n_3}-x^i_{n_4}|<\gamma .\numberthis\label{gammaenergy}\]
for each $1\le i\le r$ and ${n_1},{n_2},{n_3},{n_4}\le N$. If there exists some $\delta>0$ such that for each  $1\le i\le r$ and for all $\eta>0$, 
\[E_{N,\gamma}^{(i)*}\ll_{\eta,\delta} N^{2+\eta}+\gamma N^{3-\delta}\numberthis\label{thm3bound}\] as $N\to \infty $, then the sequence   $\{\vec {\alpha}\cdot\vec a(\vec x_n)\}_{n\ge 1}$ has PPC for almost all $\vec{\alpha}\in \R^r$.
\end{thm}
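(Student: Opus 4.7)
The plan is to follow the same variance-reduction strategy used for Theorem \ref{thm2}, but to keep the auxiliary parameter $\gamma$ alive throughout the argument and to exploit the refined bound \eqref{thm3bound} via a dyadic decomposition in $\gamma$. After restricting $\vec\alpha$ to a bounded box $[0,M]^{r}$ and using a subsequence plus Borel--Cantelli argument, it suffices to show that the variance
\[
V(N)\ :=\ \int_{[0,M]^{r}}\bigl|R_{2}(I,\vec a,N)(\vec\alpha)-2s\bigr|^{2}\,d\vec\alpha
\]
decays like a negative power of $N$ along a sufficiently dense subsequence $N=N_{k}$.

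The first step is to expand the indicator of the short interval $I/N^{r}$ in \eqref{vecpaircor} by a smooth Fourier approximation and compute the resulting $L^{2}$-norm. The diagonal contribution matches $2s$, and the off-diagonal part becomes a weighted count of solutions of the Diophantine inequality \eqref{hyp} with $1\le j_{1},j_{2}\ll N^{r+\epsilon}$. Following the approach announced in the introduction, I would then use a twisted moment of the Riemann zeta function (integration in a vertical parameter on the critical line followed by Parseval) to eliminate the integer coefficients $j_{1},j_{2}$ in \eqref{hyp}, rewriting the whole expression as an integral over $\gamma\in(0,1]$ of a smooth rapidly-decaying weight $w(\gamma)$ times the product $\prod_{i=1}^{r}E_{N,\gamma}^{(i)*}$ (plus negligible error terms), with the effective range of $\gamma$ bounded below by a negative power of $N$.

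The final step is a dyadic decomposition $\gamma\asymp 2^{-k}$, followed by insertion of the hypothesis \eqref{thm3bound} in each of the $r$ coordinates. Expanding the product gives $2^{r}$ terms indexed by subsets $S\subseteq\{1,\dots,r\}$, where the $i$-th factor contributes $N^{2+\eta}$ when $i\notin S$ and $\gamma N^{3-\delta}$ when $i\in S$. The dyadic sums in $\gamma$ then converge because (a) for every $i\in S$ the off-diagonal term carries a positive power of $\gamma$, and (b) the weight $w(\gamma)$ together with the lower cutoff on $\gamma$ controls the $S=\emptyset$ term at the cost of at most a $\log N$ factor. Choosing $\eta$ small enough in terms of $\delta$ and $r$, each of the $2^{r}$ contributions is $O(N^{-\delta'})$ relative to the main normalisation, giving $V(N)\ll N^{-\delta''}$ for some $\delta''>0$ and hence almost-everywhere PPC via Borel--Cantelli.

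The main obstacle is the identification step leading to the factorised weight $w(\gamma)\prod_{i}E_{N,\gamma}^{(i)*}$: one must verify that the twisted-zeta manipulation really separates the coordinates so that \eqref{thm3bound} can be applied component by component, and that the resulting exponent arithmetic closes uniformly in the subset $S$. Once this factorisation is secured, the dyadic balancing is routine, since \eqref{thm3bound} was engineered precisely to deliver a power saving on every scale of $\gamma$.
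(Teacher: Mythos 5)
Your proposal gets the broad architecture right: bound the variance of the smoothed pair--correlation counting function, eliminate the integer twists $j_{1},j_{2}$ via the twisted zeta--moment device (Lemma~\ref{averagelem}), and then feed the hypothesis \eqref{thm3bound} into a scale-by-scale analysis. You also correctly observe that the dyadic variable and the $\gamma$-scale are linked (in the paper the dyadic variable is the size $2^{u}$ of the twists $j_{1},j_{2}$, and the fine scale is taken as $\gamma=2^{-u/r}$, so your proposed dyadic decomposition in $\gamma$ is equivalent in spirit).

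There is, however, a genuine gap exactly where you flag it, and I do not think you can close it as sketched. The twisted-zeta manipulation does \emph{not} produce a cleanly factorised weight $w(\gamma)\prod_{i}E_{N,\gamma}^{(i)*}$. What actually comes out is an integral of $\prod_{i}|P(s_{i})|^{2}\Phi(s_{i}/T)$ against a zeta kernel whose argument is the \emph{coupled} variable $\sum_{i}s_{i}$ (see the structure of $I_{1},I_{2},I_{3}$ in the proof of Lemma~\ref{lemma5}). The coordinates therefore do not decouple automatically: you must estimate the coupled zeta factor (here via the subconvexity bound $\zeta(1/2+\iota t)\ll |t|^{1/6}$, rather than the Ivi\'c moment used for Theorem~\ref{thm2}) and feed that back into a product of $P$-integrals via Lemma~\ref{lemma2}. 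It is only at that point that $(E^{*}_{N,\gamma})^{r}$ appears, and even then with the crude max over coordinates, not the product you write. Your proposal also omits the case-split on the size of $\min\{\rho_{i}(u_{i}),\rho_{i}(v_{i})\}$, which the paper needs for two reasons that are not cosmetic: in the intermediate range $\rho_{i}\in[N^{\beta},32N^{\beta})$ one gets the sharper bound $|P(0)|\ll N^{\beta/2}\sqrt{E^{*}_{N,1}}$ via Cauchy--Schwarz, and this is what makes the $I_{2},I_{3}$ terms acceptable; in the large range $\min\rho_{i}>N^{1.01}$ one cannot improve $|P(0)|\ll N^{2}$ and must compensate by choosing $T=2^{u/r}N^{1+\epsilon/2}$ instead. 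Without this split, the exponent arithmetic in your $2^{r}$-term expansion is not forced to close, and the ``weight $w(\gamma)$ plus lower cutoff'' story for the $S=\emptyset$ term has no concrete support. In short: the dyadic balancing you call routine is indeed routine once the coordinate-decoupling and the $\rho$-magnitude regimes are in place, but those two steps are the actual content of the proof and are missing from the proposal.
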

From \cite[Theorem 3]{aistletner21}, we know that for every real number $\theta_i>1$ and $n_i \in \N$, the sequence $\{{n_{i}}^{\theta_i}\}$ has $E_{N,\gamma}^{(i)*}\ll_{\eta,\delta} N^{2+\eta}+\gamma N^{3-\delta}$. Using this with Theorem \ref{thm3}, we deduce 
\begin{cor}\label{cor3}
   Let $r\ge 2$. For each $1\le i\le r$, for every real number $\theta_i>1$, and $n_i \in \N$, we consider $\vec a(\vec x_n) =({n_{1}}^{\theta_1},{n_{2}}^{\theta_2},\cdots,{n_{r}}^{\theta_r})$. Then for almost all $\vec{\alpha}\in \R^r$, the sequence$\{\vec {\alpha}\cdot\vec a(\vec x_n)\}_{n\ge 1}$ has PPC.
\end{cor}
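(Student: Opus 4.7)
The plan is to apply Theorem \ref{thm3} directly to the vector sequence $\vec a(\vec x_n) = (n_1^{\theta_1}, \ldots, n_r^{\theta_r})$. To do this I must verify two hypotheses for each coordinate $1 \le i \le r$: the monotone gap condition \eqref{growthcon}, and the additive-energy bound \eqref{thm3bound} on $E_{N,\gamma}^{(i)*}$.

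First I would check the gap condition. Since $\theta_i > 1$, the mean value theorem gives, for every $n \ge 1$,
\[
(n+1)^{\theta_i} - n^{\theta_i} \;=\; \theta_i\, \xi^{\theta_i-1} \;\ge\; \theta_i\, n^{\theta_i-1} \;\ge\; \theta_i \;>\; 0,
\]
for some $\xi \in (n,n+1)$. Setting $c_i := \theta_i$ therefore makes \eqref{growthcon} hold coordinate-wise. For the additive-energy side, the paragraph preceding the corollary already records that \cite[Theorem 3]{aistletner21} supplies
\[
E_{N,\gamma}^{(i)*} \;\ll_{\eta,\delta}\; N^{2+\eta} + \gamma N^{3-\delta}
\]
for each real $\theta_i > 1$, which is exactly the bound \eqref{thm3bound} required by Theorem \ref{thm3}.

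With both hypotheses confirmed for every $1 \le i \le r$, Theorem \ref{thm3} applies and yields at once that $\{\vec \alpha \cdot \vec a(\vec x_n)\}_{n \ge 1}$ has Poissonian pair correlation for almost every $\vec \alpha \in \R^r$. The corollary is really a direct specialization of Theorem \ref{thm3}, so there is no substantive technical obstacle; the only point of mild care is ensuring that the implicit constants produced by \cite{aistletner21} do not depend on the coordinate $i$ in a way that would spoil the simultaneous application across all $r$ coordinates, which is immediate from applying the cited theorem separately for each $\theta_i$ and absorbing the finitely many constants into the implied constant.
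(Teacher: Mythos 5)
Your proof is correct and takes essentially the same route as the paper: cite \cite[Theorem 3]{aistletner21} for the additive-energy bound \eqref{thm3bound} and apply Theorem~\ref{thm3} directly. The paper does not bother to spell out the gap condition \eqref{growthcon}, so your mean-value-theorem verification of it is a small but sensible addition rather than a different approach.
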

\subsection{Organization}
This article is organized as follows: Section \ref{sec2} contains preliminary results required to prove Theorem \ref{thm1}, \ref{thm2}, and \ref{thm3}. Section \ref{expectations} covers estimates of the expected value. In Section \ref{variance}, we define the variance of the sequence and prove Lemma \ref{lemma1}, which gives a bound of variance.  In Section \ref{mainproof}, we prove Theorems \ref{thm1} and \ref{thm2}.  In section \ref{cor}, we prove Corollary \ref{cor1}. Finally, Section \ref{thm3proof} covers proof of Theorem \ref{thm3}.
\subsection{Notation}
\begin{itemize}
    \item We write $e(x)$ for $e^{2\pi \iota x},$ $\|.\|$ is used for the max norm,
   and $\iota$ denotes $\sqrt{-1}.$
    \item $\zeta(s)$ is the Riemann zeta function.
     \item  We use the Vinogradov $\ll$ asymptotic notation. Dependence on a parameter is denoted by a subscript.
     \item For a real number x, $\lfloor{x}\rfloor$ denotes the integer part of x and $\lceil{x}\rceil$ is equal to $\lfloor{x}\rfloor+1$.
    \item We use $\N, \Z, \R$ for sets of natural numbers, integers, and real numbers, respectively. 
  \item For simplicity, we take $\vec x$, $\vec y$, $\vec z$, and $\vec w$ instead of $\vec x_{n_1}, \vec x_{n_2}, \vec x_{n_3}$, and $\vec x_{n_4}$ in proof of Theorems. Also, we are taking $E_{N}^{*}=\max\{E_{N}^{(1)*},E_{N}^{(2)*},\cdots,E_{N}^{(r)*}\}$.
\end{itemize}
\section{Acknowledgment}
This work was supported by the University Grants Commission, Department of Higher Education, Government of India [DEC18-434199 to S.G.]; and the Science and Engineering Research Board, Department of Science and Technology, Government of India [SB/S2/RJN-053/2018 to S.C.].
\section{Preliminaries}\label{sec2}
Let $\I_{[-s/N^r,s/N^r]}(x)$ be the indicator function of interval $[-s/N^r,s/N^r]$ defined as
\[\I_{[-s/N^r,s/N^r]}(x)=\left\{\begin{array}{ll}
   1  &  \text{if} \ \{x\}\ \in \ [-s/N^r,s/N^r],\\
  0   & \text{otherwise}, 
\end{array}\right.\]
where $\{x\}$ is the fractional part of $x$. We intend to show that for almost all $\vec{\alpha}\in \R^r$, we have 
\[\frac{1}{N^r}\sum_{\vec x\ne \vec y \in B(r,N)}\I_{[-s/N^r,s/N^r]}(\vec\alpha\cdot \vec a(\vec x)-\vec\alpha\cdot\vec a(\vec y))\to 2s\numberthis\label{limitcon}\]
as $N\to \infty$ and for all $s\ge 0.$ Sometimes, instead of the above notation for the indicator function, we use the notation $\I(\|\vec\alpha\cdot \vec a(\vec x)-\vec\alpha\cdot\vec a(\vec y)\|\le s/N^r)$. For  further analysis, we replace $\I_{[-s/N^r,s/N^r]}(x)$ by Selberg polynomials $f^{\pm }_{K,s,N^r}$ (for details see \cite[Chapter 1]{Montgo}) of degree at most $K$ such that for all $x$, \[f^{- }_{K,s,N^r}(x)\le \I_{[-s/N^r,s/N^r]}(x)\le f^{+ }_{K,s,N^r}(x).\]
Moreover, the Selberg polynomials $f^{\pm }_{K,s,N^r}$ satisfy
\[\int_{0}^{1}f^{\pm }_{K,s,N^r}dx=\frac{2s}{N^r}\pm \frac{1}{K+1}.\numberthis\label{f0value}\]
If the Fourier series expansion of Selberg polynomials is given by $f^{\pm }_{K,s, N^r}(x)=\sum_{j\in \Z}c_j^{\pm}e^{2\pi ijx}$, then for all $j$, \[|c_j^{\pm}|\le \min\left(\frac{2s}{N^r},\frac{1}{\pi |j|}\right)+\frac{1}{K+1}.\numberthis\label{fcoeff}\]
For $|j|>K$, we have $c_j^{\pm}=0$. Thus, for every fixed integer $t$, and for all $s\ge 0$, \eqref{limitcon} is equivalent to the condition: for almost all $\vec{\alpha}\in \R^r$, as $N\to \infty$
\[\frac{1}{N^r}\sum_{\vec x\ne \vec y \in B(r,N)}f^{\pm }_{tN^r,s,N^r}(\vec\alpha\cdot \vec a(\vec x)-\vec\alpha\cdot\vec a(\vec y))\sim N^r\int_0^1f^{\pm }_{tN^r,s,N^r}(x)dx.\numberthis\label{limcon2}\]
 For a fixed  $t\in \N$ and a fixed positive real number $s$, we denote $f^{+}_{tN^r,s,N^r}$ (or $f^{-}_{tN^r,s,N^r}$) by $f_{N^r}.$  
 
 Since we are considering real-valued sequences, we do not have any periodicity. Therefore, to compute the expected value and the variance with respect to $\vec{\alpha}$ of the left-hand side of \eqref{limcon2},  we integrate over all $\vec{\alpha}\in \R^r$ with respect to an appropriate  measure $\mu$ defined as
\[d\mu(\vec x)=\dfrac{2sin^2(x_1/2)}{\pi x_1^2}\frac{2sin^2(x_2/2)}{\pi x_2^2}\cdots \frac{2sin^2(x_r/2)}{\pi x_r^2}dx_1dx_2\cdots dx_r,\]where $\vec{x}=(x_1,x_2,\cdots,x_r).$ The Fourier transform $x_i \to \dfrac{2sin^2(x_i/2)}{\pi x_i^2}$ for all $i\in \{1,2,\cdots,r\}$ is a non-negative real function, uniformly bounded, and the support of the function is $(-1,1)$. If the Fourier series expansion of $f_{N^r}(x)$ is $\sum_{j\in \Z}c_je(jx)$, then from \eqref{fcoeff}, we have   $c_j=0$ when $|j|>tN^r$, $|c_j|\le {1}/{N^r}$ for $0<|j|\le tN^r$, and $c_0=\int_0^1f_{N^r}(x)dx\ll 1/N^r.$ Next, we define 
\[\Phi(t):=e^{-t^2/2}.\numberthis\label{phidef}\]
The function $\Phi(t)$ has a positive Fourier transform $\hat{\Phi}(u)=\sqrt{2\pi}\Phi(u).$ Also, define the function $K$ as
\[K(t):=\frac{\sin^2({1+\epsilon/4})t\log N}{\pi t^2({1+\epsilon/4})\log N},\numberthis\label{kdef}\]with Fourier transform $\hat{K}(u)=\max\left(1-\dfrac{|u|}{2(1+\epsilon/4)\log N},0\right).$

In \cite{Tenen}, the authors derive a theorem for the average of a certain class of holomorphic function. We use this to estimate the complex integration in Lemma \ref{lemma5}.  
\begin{lem}\cite[Lemma 5.3]{Tenen}\label{averagelem}
   Let $\sigma \in (-\infty, 1)$ and $F$ be a holomorphic function in the strip $y=\Im{(z)}\in [\sigma-2,0]$, such that 
   \[\sup_{\sigma-2\le y\le 0}|F(x+\iota y)|\ll \frac{1}{x^2+1}.\numberthis\label{series}\] Then, for all $s=\sigma+\iota t\in \C, t\ne 0$, we have \[\sum_{k,l\ge 1}\frac{\hat{F}(\log kl)}{k^sl^{\Bar{s}}}=\int_{\R}\zeta(s+\iota u)\overline{\zeta(s-\iota u)}F(u)du+2\pi \zeta(1-2\iota t)F(\iota s-\iota)+2\pi\zeta(1+2\iota t)F(\iota \Bar{s}-\iota).\]
\end{lem}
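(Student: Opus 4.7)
The plan is to establish the identity first in the half-plane $\sigma:=\Re(s)>1$, where the Dirichlet series converges absolutely and Fourier inversion expresses it as a single integral of $\zeta(s+\iota x)\zeta(\bar s+\iota x)$ against $F$, and then to analytically continue to $\sigma<1$ by a vertical contour shift that picks up two residues, which produce the explicit boundary terms in the statement. Consistent with the normalisation $\hat{\Phi}(u)=\sqrt{2\pi}\Phi(u)$ recorded in \eqref{phidef}, I would use the convention $\hat F(u)=\int_{\R}F(x)e^{-\iota ux}\,dx$, so that $\hat F(\log kl)=\int_{\R}F(x)(kl)^{-\iota x}\,dx$.

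First, for $\sigma>1$ the double series $\sum_{k,l}k^{-s-\iota x}l^{-\bar s-\iota x}$ converges absolutely and equals $\zeta(s+\iota x)\zeta(\bar s+\iota x)$, so Fubini gives
\[\sum_{k,l\ge1}\frac{\hat F(\log kl)}{k^{s}l^{\bar s}}=\int_{\R}F(x)\,\zeta(s+\iota x)\zeta(\bar s+\iota x)\,dx.\]
For $\sigma<1$ this fails on the real line, but holomorphicity of $F$ in the strip $[\sigma-2,0]$ together with the uniform bound $|F(x+\iota y)|\ll(x^{2}+1)^{-1}$ lets me shift the Fourier-inversion contour down to $\Im x=-y$ for any $y\in(1-\sigma,\,2-\sigma]$. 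On this line $|(kl)^{-\iota x}|=(kl)^{-y}$ with $\sigma+y>1$, restoring absolute convergence of the $(k,l)$-sum, and I obtain
\[\sum_{k,l\ge1}\frac{\hat F(\log kl)}{k^{s}l^{\bar s}}=\int_{\Im x=-y}F(x)\,\zeta(s+\iota x)\zeta(\bar s+\iota x)\,dx.\]

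Next, I move the contour back up to the real axis. The integrand is meromorphic in $-y\le\Im x\le 0$ with simple poles at $x_{1}=\iota(s-1)$ and $x_{2}=\iota(\bar s-1)$, both at height $\sigma-1$; these are distinct because $t\ne0$ and lie strictly inside the strip by the choice of $y$. From $\zeta(w)=(w-1)^{-1}+O(1)$ the residue of $\zeta(s+\iota x)$ at $x_{1}$ equals $-\iota$, so, using $\bar s+\iota x_{1}=\bar s-(s-1)=1-2\iota t$, the residue of the whole integrand at $x_{1}$ is $-\iota F(\iota s-\iota)\zeta(1-2\iota t)$; symmetrically at $x_{2}$ it is $-\iota F(\iota\bar s-\iota)\zeta(1+2\iota t)$. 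Applying the residue theorem to a rectangle with horizontal sides $\Im x=-y$ and $\Im x=0$, whose vertical sides vanish in the limit thanks to the uniform $(x^{2}+1)^{-1}$ decay, contributes $2\pi\iota$ times the sum of these residues; combined with the identity $\zeta(\bar s+\iota u)=\overline{\zeta(s-\iota u)}$ on the real axis (valid because $\zeta$ has real Dirichlet coefficients), this produces precisely the formula displayed in the lemma.

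The hard part will be justifying Fubini and the vanishing of the vertical segments simultaneously. The uniform bound $|F(x+\iota y)|\ll(x^{2}+1)^{-1}$ across the whole strip is exactly what closes both gaps, because it makes the shifted integrand absolutely summable in $(k,l)$ and forces the contribution of the vertical segments to vanish at large $|\Re x|$; and the assumption $t\ne0$ is essential to keep the two poles off each other and off the real axis, so that both the remaining real-line integral and the residue computation make sense as written.
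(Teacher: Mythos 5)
This lemma is imported verbatim from \cite[Lemma 5.3]{Tenen}, so the paper contains no proof of its own for you to be compared against; what follows is therefore an assessment of your argument on its own terms.

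Your route---interpret $\hat F(\log kl)$ by Fourier inversion, note that for $\sigma<1$ the induced double sum over $(k,l)$ diverges on the real axis, shift the Fourier contour down into the strip $\Im x\in[\sigma-2,\sigma-1)$ where $\Re(s+\iota x)>1$ restores absolute convergence, interchange sum and integral, and then shift back up across the double pole locus picking up the residues of $\zeta(s+\iota x)$ at $x=\iota s-\iota$ and of $\zeta(\bar s+\iota x)$ at $x=\iota\bar s-\iota$---is exactly the standard derivation of such twisted-moment identities, and your residue bookkeeping is correct: the change of variables contributes $-\iota$, the non-polar factor evaluates to $\zeta(1\mp 2\iota t)$, and $2\pi\iota\cdot(-\iota)=2\pi$ produces the two boundary terms in the statement. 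The identification $\zeta(\bar s+\iota u)=\overline{\zeta(s-\iota u)}$ on the real axis is also correct.

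The one step you wave at rather than prove is the vanishing of the vertical segments when you move the contour from $\Im x=-y$ back to $\Im x=0$. There the integrand is $\zeta(s+\iota x)\zeta(\bar s+\iota x)F(x)$, and along a vertical segment at $\Re x=R$ the point $s+\iota x$ travels through real parts in $[\sigma,\sigma+y]$. For $\sigma$ near $1/2$ (which is all the paper ever uses) the convexity bound $\zeta(\sigma'+\iota t)\ll_\epsilon |t|^{\max(0,(1-\sigma')/2)+\epsilon}$ gives a product growth well below $|R|^2$, and the assumed bound $|F|\ll (R^2+1)^{-1}$ does kill the verticals. But for $\sigma$ strongly negative, $\zeta(\sigma+\iota R)$ grows like $|R|^{1/2-\sigma}$ by the functional equation, so the product of the two zeta factors can grow like $|R|^{1-2\sigma}$, and a decay of only $(R^2+1)^{-1}$ in $F$ is then insufficient; you would either need faster horizontal decay of $F$ or a more careful dyadic argument. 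You should say explicitly where the vertical-segment estimate relies on a growth bound for $\zeta$ in the relevant strip, and note the (implicit) restriction on $\sigma$ that makes $(x^2+1)^{-1}$ decay enough. Apart from that, your sketch is a faithful and essentially complete account of the intended proof.
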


Next, the expected value of the left side of \eqref{limcon2} with respect to $\vec {\alpha}$ is defined as
\[E(f_{N^r},\mu):=\int_{\R^r}\frac{1}{N^r}\sum_{\vec x\ne \vec y \in B(r,N)}f_{N^r}(\vec\alpha\cdot \vec a(\vec x)-\vec\alpha\cdot\vec a(\vec y))d\mu(\vec\alpha).\numberthis\label{expectedvalue}\]
 We also define the variance  as follows: if we write \[h_{N^r}(x)=f_{N^r}(x)-\int_0^1f(x)dx=\sum_{\substack{j\in \Z\\j\ne 0}}c_je(jx),\numberthis\label{hfunc}\]
    the variance is defined as 
    \[Var(h_{N^r},\mu):=\int_{\R^r}\left(\frac{1}{N^r}\sum_{\vec x\ne \vec y \in B(r,N)}h_{N^r}(\vec\alpha\cdot \vec a(\vec x)-\vec\alpha\cdot\vec a(\vec y))\right)^2d\mu(\vec\alpha).\numberthis\label{vardef}\]
To obtain \eqref{limcon2}, we first find the expectation and the variance, and then using the hypothesis of Theorems \ref{thm1}, \ref{thm2}, and \ref{thm3}, we show that the variance is not too large. After that, applying Chebyshev’s inequality and the first Borel–Cantelli lemma yields the desired results. See section \ref{mainproof}.
\section{Controlling Expectation}\label{expectations}
Let $f_{N^r}$ be the Selberg polynomial and $\mu$ be the Lebesgue measure defined in the previous section. Then, from \eqref{expectedvalue}, the expectation of the pair correlation function is given by  
\begin{align*}
 &  \left|\int_{\R^r}\frac{1}{N^r}\sum_{\vec x\ne \vec y \in B(r,N)}f_{N^r}(\vec\alpha\cdot \vec a(\vec x)-\vec\alpha\cdot\vec a(\vec y))d\mu(\vec\alpha)-N^r\int_0^1 f_{N^r}(x)dx\right|\\&=\left|\frac{1}{N^r}\sum_{\vec x\ne \vec y\in B(r,N)}\int_{\R^r}\sum_{j\in \Z}c_je(j(\vec\alpha\cdot \vec a(\vec x)-\vec\alpha\cdot\vec a(\vec y)))d\mu(\vec\alpha)-N^r\int_0^1 f_{N^r}(x)dx\right|\\
    &\le\left(N^r-\frac{N^r(N^r-1)}{N^r}\right)\int_0^1 f_{N^r}(x)dx+\frac{1}{N^r}\sum_{1\le |j|\le tN^r}|c_j|\sum_{\vec x\ne \vec y \in B(r,N)}\left|\int_{\R^r}e(j(\vec\alpha\cdot \vec a(\vec x)-\vec\alpha\cdot\vec a(\vec y)))d\mu(\vec\alpha)\right|\\
    &\ll \frac{1}{N^r}+\frac{1}{N^{2r}}\sum_{1\le |j|\le tN^r}\sum_{\vec x\ne \vec y \in B(r,N)}\I(\|j( \vec a(\vec x)-\vec a(\vec y))\|<1)\ll \frac{1}{N}.
    \end{align*}
    Here the last inequality is estimated using the growth condition of $a^i(x_i)$ for all $i\in \{1,2,\cdots,r\}$, and the fact $|c_j|\le 1/N^r.$ Hence, we can write 
    \[E(f_{N^r},\mu)=\int_{\R^r}\frac{1}{N^r}\sum_{\vec x\ne \vec y \in B(r,N)}f_{N^r}(\vec\alpha\cdot \vec a(\vec x)-\vec\alpha\cdot\vec a(\vec y))d\mu(\vec\alpha)=N^r\int_0^1 f_{N^r}(x)dx+\BigO{\frac{1}{N}}.\numberthis\label{expectedval}\]
    \section{Controlling Variance}\label{variance}
 We keep the setup as in Section \ref{sec2}. In this section, we study the variance. 
    First, we  square the inner part of the integral of the right side of \eqref{vardef}, and using the Fourier transform of the indicator function, we have 
   \begin{align*}
      Var(h_{N^r},\mu)&=\int_{\R^r} \frac{1}{N^{2r}}\sum_{\substack{\vec x, \vec y, \vec z,\vec w \in B(r,N)\\\vec x\ne\vec y, \vec z\ne \vec w}}\sum_{\substack{j_1,j_2\in \Z\\j_1,j_2\ne 0}}c_{j_1}c_{j_2}e(\vec \alpha \cdot(j_1(\vec a(\vec x)-\vec a(\vec y))-j_2(\vec a(\vec z)-\vec a(\vec w))))d\mu (\vec \alpha)\\
      &\ll \frac{1}{N^{4r}}\sum_{\substack{\vec x, \vec y, \vec z,\vec w \in B(r,N)\\\vec x\ne\vec y, \vec z\ne \vec w}}\sum_{1\le j_1,j_2\le tN^r}\I(\|j_1(\vec a(\vec x)-\vec a(\vec y))-j_2(\vec a(\vec z)-\vec a(\vec w))\|<1).\numberthis\label{varhyp1}
   \end{align*}
   We prove the following bound of  $Var(h_{N^r},\mu)$ in terms of the additive energy $E_{N}^{*}$.
    \begin{lem}\label{lemma1}
       For $r\ge 2$, $\epsilon>0$, and $E_{N}^{*}=\max\{E_{N}^{(1)*},E_{N}^{(2)*},\cdots,E_{N}^{(r)*}\}$, we have
       \begin{align*}
      Var(h_{N^r},\mu)\ll \max\left(N^{\frac{58-241r}{89}+\epsilon}(E_N^{*})^{\frac{76r}{89}},N^{\frac{59-280r+\epsilon}{89}}(E_{N}^{*})^{r},N^{\frac{116+39k-280r}{89}}(E_{N}^*)^{\frac{89r-13k}{89}}\right), 
 \end{align*}
    as $N\to \infty.$ Here, $k\in \N$ such that $1\le k\le r.$
    \end{lem}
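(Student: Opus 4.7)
The plan is to start from the Fourier-analytic bound \eqref{varhyp1}, which expresses $Var(h_{N^r},\mu)$ as a weighted count over quadruples $(\vec x,\vec y,\vec z,\vec w)\in B(r,N)^4$ (with the diagonal restrictions) together with a pair $(j_1,j_2)\in [1,tN^r]^2$, all subject to the coordinate-wise inequality in the max norm. Since the condition $\|j_1(\vec a(\vec x)-\vec a(\vec y))-j_2(\vec a(\vec z)-\vec a(\vec w))\|<1$ decouples across the $r$ coordinates, for each fixed $(j_1,j_2)$ the number of admissible quadruples factors as $\prod_{i=1}^{r}S_i(j_1,j_2)$, where $S_i(j_1,j_2)$ counts one-dimensional quadruples $(x^i,y^i,z^i,w^i)$ satisfying $|j_1(a^i(x^i)-a^i(y^i))-j_2(a^i(z^i)-a^i(w^i))|<1$. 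The task is then to control $\sum_{1\le j_1,j_2\le tN^r}\prod_{i=1}^r S_i(j_1,j_2)$.

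Two complementary bounds on $S_i(j_1,j_2)$ will drive the argument. The \emph{direct} bound exploits the growth hypothesis: because $a^i$ is strictly monotone with uniform gap $c_i$, each inequality $|j_1 u-j_2 v|<1$ can, after scaling by $\max(j_1,j_2)$, be rephrased in terms of the one-dimensional additive-energy quantity \eqref{addenergy}, giving a bound of order $E_N^{(i)*}/\max(j_1,j_2)$ plus a trivial diagonal term of size $N^2$. The \emph{analytic} bound smooths $\I(|\cdot|<1)$ by a Selberg-type majorant as in Section \ref{sec2}, rewrites the sum over $j_1,j_2$ as a Dirichlet series in the two variables, and invokes Lemma \ref{averagelem} to convert it into an integral on the critical line of $\zeta(s+\iota u)\overline{\zeta(s-\iota u)}$; a suitable moment bound for $\zeta$ on $\Re(s)=1/2$ is then applied, and the constants $89$, $280$, $76$, $13$ in the statement originate from the exponents in this moment estimate.

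The three terms of the maximum correspond to three different allocations of the $r$ coordinates between the two bounds. The second term, with $(E_N^*)^r$ and the steepest $N^{-280r/89}$ saving, comes from applying the analytic/zeta-moment route uniformly across all $r$ coordinates while loading the additive-energy input once per coordinate. The first term arises from an alternative aggregation that gains a weaker $N$-exponent but a reduced $E_N^*$ exponent $76r/89$. The third term, parametrised by $k\in\{1,\dots,r\}$, \emph{interpolates}: treating $k$ coordinates by the zeta-moment route (each paying $N^{39/89}$ while eating only $(E_N^*)^{76/89}$ of the additive-energy budget) and the remaining $r-k$ coordinates directly via $E_N^{(i)*}\le E_N^*$. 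Expanding the product, summing the geometric $j_1,j_2$ contributions, and bookkeeping the $E_N^*$ powers yields the exponents $89r-13k$ and $116+39k-280r$.

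The main obstacle will be the rigorous execution of the analytic step: one must verify the decay hypothesis \eqref{series} of Lemma \ref{averagelem} for the particular test function produced after smoothing and Mellin inversion, and then invoke a zeta moment estimate strong enough to yield the clean denominator $89$ without introducing auxiliary losses. A secondary difficulty is the coordinate-wise Hölder bookkeeping: since the $r$ factors $S_i(j_1,j_2)$ share the same $(j_1,j_2)$, weights must be chosen so that no single coordinate's bound is wasted; the parameter $k$ in the third term encodes precisely this optimal weighting, and taking the maximum over the three regimes absorbs all cases.
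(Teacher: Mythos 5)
Your high-level architecture is on target: the paper does start from the Fourier bound \eqref{varhyp1}, dyadically decomposes in $j$ (with a Cauchy--Schwarz step you don't mention), factors the count over the $r$ coordinates because the max-norm condition decouples, encodes each one-dimensional count via a two-variable Dirichlet series, applies the Tenenbaum identity (Lemma~\ref{averagelem}) to bring in $\zeta$ on the critical line, and finishes with Ivi\'c's moment bound with $A=178/13$, which is indeed the source of the denominators $89$ and $13$ in the exponents. That much you have right.

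However, the crucial mechanism generating the three-term maximum is misidentified. You describe a dichotomy between a ``direct'' additive-energy bound $O\bigl(E_N^{(i)*}/\max(j_1,j_2)\bigr)$ applied to some coordinates and the zeta-moment route applied to others, with $k$ counting how many coordinates take the analytic route. In the paper there is no such direct bound at the coordinate level: \emph{every} coordinate, in every case, passes through Lemmas~\ref{lemma3}--\ref{lemma5} and the $\zeta$-moment estimate. What actually varies is the magnitude of the differences $\rho_i(u_i)=a^i(x^i)-a^i(y^i)$. The paper splits according to whether $\min\{\rho_i(u_i),\rho_i(v_i)\}$ exceeds $N^{1.01}$ or not: all coordinates large gives the first term; all coordinates small gives the second; a mix of $k$ large and $r-k$ small gives the $k$-parametrised third term. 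The reason the ``small $\rho$'' regime beats the trivial bound is that after localising $\rho_i(u_i),\rho_i(v_i)\in[N^\beta,32N^\beta)$ one can upgrade the trivial $|P(0)|\le N^2$ to $|P(0)|\ll N^{\beta/2}\sqrt{E_N^*}$ by Cauchy--Schwarz against the additive energy, and this improved bound feeds into the same H\"older/Ivi\'c computation. The additive energy therefore enters through the $|P(0)|$ bound (and the $\sum_k b(k)^2$ bound in Lemma~\ref{lemma2}), not via a separate non-analytic counting argument. If you run your proposed decomposition (pure additive-energy counting on some coordinates without the zeta moments), you will not reproduce the exponents $280/89$, $241/89$, or $39k/89$; the arithmetic depends specifically on the $T$-choice $2^{u/r}N^\beta$ with $\beta\le 1.01$ and the improved $|P(0)|$ bound in the small-$\rho$ regime. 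This is the genuine gap in the proposal, and it changes both the constants and the structure of the interpolation, so it would need to be reworked before the argument could close.
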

    \begin{proof}[Proof of Lemma \ref{lemma1}]
    To prove Lemma \ref{lemma1},  we begin with \eqref{vardef}. First, we break the sum over $j_1$ and $j_2$ into dyadic intervals and use Cauchy-Schwarz inequality. Let $U$ be the smallest integer such that $2^U\ge tN^r$, we have
   \begin{align*}
Var(h_{N^r},\mu)&=\int_{\R^r}\left(\frac{1}{N^r}\sum_{u=1}^U\sum_{\vec x\ne \vec y \in B(r,N)}\sum_{2^{u-1}\le |j|<2^u }c_je(j(\vec\alpha\cdot \vec a(\vec x)-\vec\alpha\cdot\vec a(\vec y)))\right)^2d\mu(\vec\alpha)\\&\ll \frac{1}{N^{2r}}\int_{\R^r}\left(\sum_{k=1}^{U}1\right)\sum_{u=1}^{U}\left|\sum_{\vec x\ne \vec y \in B(r,N)}\sum_{2^{u-1}\le |j|<2^u }c_je(j(\vec\alpha\cdot \vec a(\vec x)-\vec\alpha\cdot\vec a(\vec y)))\right|^2d\mu(\vec\alpha)\\
       &\ll \frac{\log N}{N^{4r}}\sum_{u=1}^{U}\sum_{\substack{\vec x, \vec y, \vec z,\vec w \in B(r,N)\\\vec x\ne\vec y, \vec z\ne \vec w}}\sum_{2^{u-1}\le j_1,j_2<2^u}\I(\|j_1(\vec a(\vec x)-\vec a(\vec y))-j_2(\vec a(\vec z)-\vec a(\vec w))\|<1).\numberthis\label{varadd}
       \end{align*}
 Let $\vec \rho (\vec u)=(\rho_1(u_1), \rho_2(u_2),\cdots,\rho_r(u_r))$ such that $\rho_i(u_i)=a^i(x_i)-a^i(y_i)$ for all $1\le i\le r$ and  similarly define $\vec \rho (\vec v)=(\rho_1(v_1), \rho_2(v_2),\cdots,\rho_r(v_r))$ such that $\rho_i(v_i)=a^i(z_i)-a^i(w_i)$ for all $1\le i\le r$. So, there are $M=N^2-N$ possible values of $u_i$ and $v_i$ for all $1\le i\le r.$ Now, for a fixed $u\le 2tN^r$, estimation of the above sum is equivalent to calculating the sum
 \[\prod_{i=1}^r\sum_{\substack{1\le u_i,v_i\le M\\ \text{for some  i}\ u_i\ne v_i}}\sum_{2^{u-1}\le j_1,j_2<2^u}\I(|j_1\rho_i(u_i)-j_2\rho_i(v_i)|<1).\numberthis\label{varsum}\]
  Next, we split the sum into three cases depending on the size of $\min\{\rho_i(u_i),\rho_i(v_i)\}.$ In case 1, we consider $\min\{\rho_i(u_i),\rho_i(v_i)\}>N^{1.01}$ for all $1\le i\le r$. In case 2, we assume $\min\{\rho_i(u_i),\rho_i(v_i)\}\le N^{1.01}$ for all $1\le i\le r$, and in the last case, we assume that for some $k\le r$, $\min\{\rho_i(u_i),\rho_i(v_i)\}>N^{1.01}$ for $1\le i\le k$, and $\min\{\rho_i(u_i),\rho_i(v_i)\}\le N^{1.01}$ for $k+1\le i\le r$.
     \subsection{Estimation of the number of solutions in case 1 }  
    Let $\min\{\rho_i(u_i),\rho_i(v_i)\}>N^{1.01}$ for all $1\le i\le r$. For integers $k_1,k_2,\cdots,k_r\ge N^{1.01}$, we define 
   \[b(k_i):=\sum_{u_i=1}^{M}\I(\rho_i(u_i)\in[k_i,k_i+1)).\numberthis\label{bkdef}\] This implies 
    \[\sum_{k_i=1}^{\infty}b(k_i)\le M.\]
    Set $T=2^{u/r}N^{1+\epsilon/2}$ and choose $u$  such that $2^{u-1}\le j_1,j_2<2^u$. Now, we split the interval $[1,\infty)$ into disjoint union of interval $I_{h_i}$ for $h_i\ge 0$ defined as
    \[I_{h_i}:=\left[\left\lceil\left(1+\frac{1}{T}\right)^{h_i}\right\rceil,\left\lceil\left(1+\frac{1}{T}\right)^{h_i+1}\right\rceil\right].\] Using $b({k_i})$, for $h_i\ge 0$, we set 
    \[a(h_i):=\left(\sum_{k_i\in I_{h_i}}(b(k_i))^2\right)^{1/2}.\numberthis\label{ahdef}\]
To obtain the number of solutions of \eqref{varsum} in case 1, we prove the following Lemmas. First, we convert the counting problem in \eqref{varsum} in terms of $a(h_i)$ for $1\le i\le r$.
  \begin{lem}\label{lemma3}
    We have 
\begin{align*}
    \sum_{2^{u-1}\le j_1,j_2<2^u}\prod_{i=1}^r\sum_{1\le u_i,v_i\le M}\I(|j_1\rho_i(u_i)-j_2\rho_i(v_i)|<1) \ll  \sum_{2^{u-1}\le j_1,j_2<2^u} \prod_{i=1}^r\sum_{\substack{h_i,g_i\ge 0\\ \left|(1+\frac{1}{T})^{h_i-g_i}-\frac{j_2}{j_1}\right|\le \frac{4}{T}}}a(h_i)a(g_i).
\end{align*}
  \end{lem}  
  \begin{proof}
    Let $j_1$ and $j_2$ be fixed and $j_1\ge j_2$. Assume that for $1\le i\le r$, $k_i\ge N^{1.01}$ is an integer in $I_{h_i}$, and let $\rho_i(u_i)\in[k_i,k_i+1).$ Then for $1\le i\le r$, the inequality $|j_1\rho_i(u_i)-j_2\rho_i(v_i)|<1$ is only possible if
    \[\left|\left\lceil\frac{j_1k_i}{j_2}\right\rceil- \rho_i(v_i)\right|<4.\numberthis\label{eqncond}\]
    Next, define a mapping $k_i\to \eta(k_i)$ where $\eta(k_i)=\left\lceil\frac{j_1k_i}{j_2}\right\rceil.$ This gives
    \begin{align*}
        \sum_{\substack{\rho_i(u_i)\in I_{h_i}\\\rho_i(v_i)\in I_{g_i} }}\I(|j_1\rho_i(u_i)-j_2\rho_i(v_i)|<1)&\ll \sum_{k_i\in I_{h_i}}\sum_{\rho_i(u_i)\in [k_i,k_i+1)}\sum_{\substack{\rho_i(v_i)\in I_{g_i}\\|\eta(k_i)-\rho_i(v_i)|<4}}1 
        \\& \ll  \sum_{k_i\in I_{h_i}}\sum_{\rho_i(u_i)\in [k_i,k_i+1)}\sum_{-4\le v\le 3}\sum_{\substack{\rho_i(v_i)\in I_{g_i}\\ \rho_i(v_i)\in [\eta(k_i)+v,\eta(k_i)+v+1)}}1
        \\&\ll \sum_{-4\le v\le 3}\sum_{\substack{\rho_i(u_i)\in I_{h_i}\\ \eta(k_i)+v\in I_{g_i}}}b(k_i)b(\eta(k_i)+v)\\&
        \ll a(h_i)a(g_i). \numberthis\label{inequality2}
    \end{align*}
    Next, for fixed $j_1$ and $j_2$, such that  $j_1\ge j_2$ and  $1\le i\le r$, assume that $\rho_i(u_i)\in I_{h_i},\rho_i(v_i)\in I_{g_i}$ such that $|j_1\rho_i(u_i)-j_2\rho_i(v_i)|<1.$ This implies 
    \begin{align*}
        \left|\frac{\rho_i(u_i)}{\rho_i(v_i)}-\frac{j_2}{j_1}\right|< \frac{1}{j_1\rho_i(v_i)}\le \frac{1}{2^{u-1}N^{1.01}}\le \frac{1}{T}.\numberthis\label{inequality3}
    \end{align*}
    Since $\rho_i(u_i)\in I_{h_i},\rho_i(v_i)\in I_{g_i}$, therefore $\dfrac{\rho_i(u_i)}{\rho_i(v_i)}$ lies between $(1+1/T)^{h_i-g_i-1}$ and $(1+1/T)^{h_i-g_i+1}$, and so we have
    \[\left|\frac{\rho_i(u_i)}{\rho_i(v_i)}-\left(1+\frac{1}{T}\right)^{h_i-g_i}\right|\le \frac{3}{T}.\numberthis\label{ineqaulity4}\]
    From \eqref{inequality3} and \eqref{ineqaulity4}, we have 
    \[\left|\left(1+\frac{1}{T}\right)^{h_i-g_i}-\frac{j_2}{j_1}\right|\le \frac{4}{T}.\numberthis\label{ineqaulity5}\]
    For fixed $j_1$ and $j_2$, and for  $1\le i\le r$, \eqref{inequality2} and \eqref{ineqaulity5} give 
    \[\sum_{1\le u_i,v_i\le M}\I(|j_1\rho_i(u_i)-j_2\rho_i(v_i)|<1) \ll  \sum_{\substack{h_i,g_i\ge 0\\ \left|(1+\frac{1}{T})^{h_i-g_i}-\frac{j_2}{j_1}\right|\le \frac{4}{T}}}a(h_i)a(g_i).\]
 We obtain the required result of Lemma \ref{lemma3} by taking the product over $i$ and sum over $j_1$ and $j_2$.
   \end{proof} 
Next, we write the right side of Lemma \ref{lemma3} in terms of complex integration. 

 \begin{lem}\label{lemma4}
 For the function $K(t)$ defined in \eqref{kdef}, we have
 \begin{align*}
  &\sum_{2^{u-1}\le j_1,j_2<2^u} \prod_{i=1}^r\sum_{\substack{h_i,g_i\ge 0\\ \left|(1+\frac{1}{T})^{h_i-g_i}-\frac{j_2}{j_1}\right|\le \frac{4}{T}}}a(h_i)a(g_i)\\&\ll \frac{2^{u}}{T^r}\int_{\R^r}\prod_{i=1}^r \sum_{j_1,j_2\ge 1}\frac{\hat{K}(\log j_1j_2)}{(j_1j_2)^{1/2}}\left(\frac{j_1}{j_2}\right)^{\iota s_i}|P(s_i)|^2\phi\left(\frac{s_i}{T}\right)ds_i,   
 \end{align*}
 where the function $P(s_i)$ is defined as\[P(s_i):=\sum_{h_i\ge 0}a(h_i)\left(1+\frac{1}{T}\right)^{\iota h_is_i}. \numberthis\label{psdef}\]
 \end{lem} 
 \begin{proof}
   From the definitions and non negative property of $\hat{K}$ and $\Phi$, we have \begin{align*}
       &\sum_{2^{u-1}\le j_1,j_2<2^u} \frac{1}{(j_1j_2)^{1/2}}\prod_{i=1}^r\sum_{\substack{h_i,g_i\ge 0\\\left|(1+\frac{1}{T})^{h_i-g_i}-\frac{j_2}{j_1}\right|\le \frac{4}{T}}}a(h_i)a(g_i)\\
       \ll & \sum_{j_1,j_2\ge 1}\frac{\hat{K}(\log j_1j_2)}{(j_1j_2)^{1/2}}\prod_{i=1}^r\sum_{h_i,g_i\ge 0}a(h_i)a(g_i)\hat{\Phi}\left(T\log\left(\frac{j_1}{j_2}(1+1/T)^{h_i-g_i}\right)\right)\\
       &\ll \frac{1}{T^r}\prod_{i=1}^r\int_{\R}\sum_{j_1,j_2\ge 1}\frac{\hat{K}(\log j_1j_2)}{(j_1j_2)^{1/2}}\left(\frac{j_1}{j_2}\right)^{\iota s_i}|P(s_i)|^2\phi\left(\frac{s_i}{T}\right)ds_i.\numberthis\label{prodsum}
   \end{align*}
   Next, using the fact $2^u\ll (j_{1}j_{2})^{1/2}\ll 2^u$, we obtain the required result.
 \end{proof}
 To solve the complex integration in Lemma \ref{lemma4}, we require the following Lemma. 
\begin{lem}\label{lemma2}
    For $\Phi(s)$ define in $\eqref{phidef}$, we have 
    \[\prod_{i=1}^{r}\int_{\R}|P(s_i)|^2\Phi(s_i/T)ds_i\ll T^r({E_N^*})^r.\]
\end{lem}
\begin{proof}
   From \eqref{phidef} and \eqref{psdef}, we have
   \begin{align*}
     \prod_{i=1}^{r} \int_{\R}|P(s_i)|^2\Phi(s_i/T)ds_i&= \prod_{i=1}^{r}\int_{\R}\sum_{h_i,g_i\ge 0}a(h_i)a(g_i)\left(1+\frac{1}{T}\right)^{\iota(h_i-g_i)s_i}\Phi(s_i/T)ds_i\\
      &= T^r\prod_{i=1}^{r}\sum_{h_i,g_i\ge 0}a(h_i)a(g_i)\int_{\R}exp\left(\iota T(h_i-g_i)s_i\log\left(1+\frac{1}{T}\right)\right)\Phi(s_i)ds_i\\
      &=T^r\prod_{i=1}^{r}\sum_{h_i,g_i\ge 0}a(h_i)a(g_i)\hat{\Phi}\left( T(h_i-g_i)\log\left(1+\frac{1}{T}\right)\right)\\&
      \ll T^r\prod_{i=1}^{r}\sum_{h_i,g_i\ge 0}a(h_i)a(g_i)\hat{\Phi}\left( \frac{h_i-g_i}{2}\right)\ll T^r\prod_{i=1}^{r}\sum_{h_i\ge 0}(a(h_i))^2 \\ &\ll  T^r\prod_{i=1}^{r}\sum_{k_i=1}^{\infty}(b(k_i))^2\ll  T^r\prod_{i=1}^{r} \sum_{\substack{u_i,v_i=1\\|\rho_i(u_i)-\rho_i(v_i)|<1}}^{M}1 \le T^r({E_N^*})^r.
   \end{align*}
\end{proof}
 Finally, in the next Lemma, we solve the complex integration written on the right side of Lemma \ref{lemma4} using Lemmas \ref{lemma2} and \ref{averagelem}.
 \begin{lem}\label{lemma5}
 We have
    \[\frac{2^{u}}{T^r}\int_{\R^r}\prod_{i=1}^r\sum_{j_1,j_2\ge 1}\frac{\hat{K}(\log j_1j_2)}{(j_1j_2)^{1/2}}\left(\frac{j_1}{j_2}\right)^{\iota s_i}|P(s_i)|^2\phi\left(\frac{s_i}{T}\right)ds_i \ll N^{\frac{58+115r}{89}+\epsilon}(E_N^{*})^{76r/89}. \]
 \end{lem}   
    \begin{proof}
      From \eqref{kdef}, it is easy to see that the function $K(x+\i t)$ satisfies the assumption \eqref{series} for the required range of $y.$ From Lemma \ref{averagelem}, we have 
      \[\int_{\R^r}\prod_{i=1}^r \sum_{j_1,j_2\ge 1}\frac{\hat{K}(\log j_1j_2)}{(j_1j_2)^{1/2}}\left(\frac{j_1}{j_2}\right)^{\iota s_i}|P(s_i)|^2\phi\left(\frac{s_i}{T}\right)ds_i=I_1+I_2+I_3,\numberthis\label{rproduct}\]
      where
      \begin{align*}
          I_1&=\int_{\R^r} \prod_{i=1}^r |P(s_i)|^2\phi\left(\frac{s_i}{T}\right)\left(\int_{\R}\zeta(1/2+\iota\sum_{i=1}^{r}s_i+\iota u)\zeta(1/2-\iota\sum_{i=1}^{r}s_i+\iota u)K(u)du\right)ds_i\\
          &=\int_{\R^r} \prod_{i=1}^r |P(s_i)|^2\phi\left(\frac{s_i}{T}\right)\left(\int_{|u|\ge T^r}\zeta(1/2+\iota\sum_{i=1}^{r}s_i+\iota u)\zeta(1/2-\iota\sum_{i=1}^{r}s_i+\iota u)K(u)du\right)ds_i\\&+\int_{\R^r} \prod_{i=1}^r |P(s_i)|^2\phi\left(\frac{s_i}{T}\right)\left(\int_{|u|\le T^r}\zeta(1/2+\iota\sum_{i=1}^{r}s_i+\iota u)\zeta(1/2-\iota\sum_{i=1}^{r}s_i+\iota u)K(u)du\right)ds_i\\&=I_{11}+I_{12},
      \end{align*}
     \begin{align*}
          I_2&= 2\pi\int_{\R^r}\prod_{i=1}^r \zeta\left(1-2\iota\sum_{i=1}^{r}s_i\right)K\left(-\sum_{i=1}^{r}s_i-\iota/2\right)|P(s_i)|^2\phi\left(\frac{s_i}{T}\right)ds_i,
      \end{align*}
      and \begin{align*}
          I_3&= 2\pi\int_{\R^r}\prod_{i=1}^r \zeta\left(1+2\iota\sum_{i=1}^{r}s_i\right)K\left(\sum_{i=1}^{r}s_i-\iota/2\right)|P(s_i)|^2\phi\left(\frac{s_i}{T}\right)ds_i.    \end{align*}
         From \cite[Chapter II.3]{Tenenbaum}, we have $\zeta(1+\iota t)\ll \log t$, for all $1\le i\le r$ \[|P(s_i)|^2\le|P(0)|^2=\left(\sum_{h_i\ge 0}a(h_i)\right)^2\ll \left(\sum_{k_i\ge 1}b(k_i)\right)^2\le N^4,\]
          and also,
          \[\left|K\left(\pm\sum_{i=1}^{r}s_i-\iota/2\right)\right|\ll \frac{N^{1+\epsilon/4}}{(\sum_{i=1}^{r}s_i+1)^2}.\]
          Using the above results, we have
          \begin{align*}
              |I_2|,|I_3|&\ll N^{4r}N^{1+\epsilon/4}\int_{\R^r} \prod_{i=1}^r \frac{\log\left(\sum_{i=1}^r s_i\right)}{(\sum_{i=1}^{r}s_i+1)^2}ds_i\ll N^{4r+1+\epsilon/4}.\numberthis\label{i2}
          \end{align*}
         Next, we estimate the value of $I_{11}$ using the bound $\zeta(1/2+\iota t)\ll |t|^{1/6}$ (see \cite{Tenenbaum}) and $K(u)\ll u^{-2}.$
         \begin{align*}
            I_{11}&\ll \int_{\R^r} \prod_{i=1}^r |P(s_i)|^2\phi\left(\frac{s_i}{T}\right)\left(\int_{|u|\ge T^r}\left(\sum_{i=1}^{r}|s_i|^{1/3}+|u|^{1/3}\right)K(u)du\right)ds_i\\
            & = \int_{\R^r}\prod_{i=1}^r \left(\sum_{i=1}^{r}|s_i|^{1/3}\right)|P(s_i)|^2\phi\left(\frac{s_i}{T}\right)\left(\int_{|u|\ge T^r}K(u)du\right)ds_i\\&+\int_{\R^r} \prod_{i=1}^r |P(s_i)|^2\phi\left(\frac{s_i}{T}\right)\left(\int_{|u|\ge T^r}|u|^{1/3}K(u)du\right)ds_i\\&
            \ll \int_{\R^r}\prod_{i=1}^r \frac{\left(\sum_{i=1}^{r}|s_i|^{1/3}\right)}{T^{r}}|P(s_i)|^2\phi\left(\frac{s_i}{T}\right)ds_i\\&+\frac{1}{T^{2r/3}}\int_{\R^r} \prod_{i=1}^r |P(s_i)|^2\phi\left(\frac{s_i}{T}\right)\left(\int_{|u|\ge T^r}|u|^{1/3}K(u)du\right)ds_i.
         \end{align*}
         Now, from Lemma \ref{lemma2}, we have 
         \[I_{11}\ll T^{r/3}(E_{N}^{*})^r.\numberthis\label{i11}\]
         For $A=\frac{178}{13}$, by Ivi\'c's theorem \cite[Theorem 8.3]{ivic}, we
         have
         \[\int_0^{T}|\zeta(1/2+\iota t)|^Adt\ll T^{2+\frac{3(A-12)}{22}+\epsilon}=T^{29/13+\epsilon}.\numberthis\label{ivicthm}\]
         One can write \[|I_{12}|\le \int_{|u|\le T^r}K(u)\left(\int_{\R^r} \prod_{i=1}^r|\zeta(1/2+\iota\sum_{i=1}^{r}s_i+\iota u)||\zeta(1/2-\iota\sum_{i=1}^{r}s_i+\iota u)||P(s_i)|^2\phi\left(\frac{s_i}{T}\right)ds_i\right)du.\numberthis\label{i12}\]
         Using H\"older's inequality with parameters $1/A+1/A+1/B=1$ and \eqref{ivicthm}, the innermost integral of \eqref{i12} becomes 
         \begin{align*}
             &\int_{\R^r} \prod_{i=1}^r\zeta(1/2+\iota\sum_{i=1}^{r}s_i+\iota u)\zeta(1/2-\iota\sum_{i=1}^{r}s_i+\iota u)|P(s_i)|^2\phi\left(\frac{s_i}{T}\right)ds_i\\
             &\ll \int_{\R^r} \left(\int_{\R^r}\prod_{i=1}^r|\zeta(1/2+\iota\sum_{i=1}^{r}s_i+\iota u)|^A\phi\left(\frac{s_i}{T}\right)ds_i\right)^{1/A}\left(\int_{\R^r}\prod_{i=1}^r|\zeta(1/2-\iota\sum_{i=1}^{r}s_i+\iota u)|^A\phi\left(\frac{s_i}{T}\right)ds_i\right)^{1/A}\\& \times |P(0)|^{2r(1-1/B)}\left(\int_{\R^r} \prod_{i=1}^r|P(s_i)|^2\phi\left(\frac{s_i}{T}\right)ds_i\right)^{1/B}\\
             &\ll (T^{29/13+\epsilon})^{2/A}N^{4r(1-1/B)}T^{r/B}(E_{N}^{*})^{r/B}=(T^{29/13+\epsilon})^{2/A}N^{8r/A}T^{r(A-2)/A}(E_{N}^{*})^{r(A-2)/A}.
         \end{align*}
         Integrating with respect to $u$, we obtain 
         \begin{align*}
              I_{12}&\ll(T^{29/13+\epsilon})^{2/A}N^{8r/A}T^{r(A-2)/A}(E_{N}^{*})^{r(A-2)/A}\\&=T^{(29+76r)/89+\epsilon}N^{52r/89}(E_{N}^{*})^{76r/89}.\numberthis\label{i12final} \end{align*}
              From \eqref{rproduct}, \eqref{i2}, \eqref{i11}, and \eqref{i12final}, we have
              \begin{align*}
                &\frac{2^{u}}{T^r}  \int_{\R^r}\prod_{i=1}^r \sum_{j_1,j_2\ge 1}\frac{\hat{K}(\log j_1j_2)}{(j_1j_2)^{1/2}}(\frac{j_1}{j_2})^{\iota s_i}|P(s_i)|^2\phi\left(\frac{s_i}{T}\right)ds_i\\&\ll \frac{2^{u}}{T^r} (N^{4r+1+\epsilon/4}+T^{r/3}(E_{N}^{*})^r+T^{(29+76r)/89+\epsilon}N^{52r/89}(E_{N}^{*})^{76r/89}).
              \end{align*}
              Taking $T=2^{u/r}N^{1+\epsilon/2}$ and using $2^u\ll N^r$, we obtain the required result.
    \end{proof}
    \begin{rem}
  Combination of  Lemmas \ref{lemma3}, \ref{lemma4}, and \ref{lemma5} yields the result for Case 1.
    \[\sum_{2^{u-1}\le j_1,j_2<2^u}\prod_{i=1}^r\sum_{1\le u_i,v_i\le M}\I(|j_1\rho_i(u_i)-j_2\rho_i(v_i)|<1)\ll N^{\frac{58+115r}{89}+\epsilon}(E_N^{*})^{76r/89}.\numberthis\label{variance1} \]
    \end{rem}
     \subsection{Estimation of the number of solutions in case 2} First, suppose that $\max\{\rho_i(u_i),\rho_i(v_i)\}\le 4N^{1/4}$ for all $1\le i\le r$. Since we assume that $a^i(x_i+1)-a^i(x_i)\ge c>0$ that is $\rho_i(u_i)>c$ for all $1\le i\le r$, therefore for a fixed $i$, at most $N^{5/4}$ $\rho_i(u_i)$'s  can be smaller than $4N^{1/4}$. For fixed $j_2,\rho_i(u_i)$ and $\rho_i(v_i)$, there are at most $\ll 1$ possible choices for $j_1$
    such that $|j_1\rho_i(u_i)-j_2\rho_i(v_i))|<1.$ Hence, for $\max\{\rho_i(u_i),\rho_i(v_i)\}\le 4N^{1/4}$, we have
    \[\sum_{2^{u-1}\le j_1,j_2<2^u}\prod_{i=1}^r\sum_{1\le u_i,v_i\le M}\I(|j_1\rho_i(u_i)-j_2\rho_i(v_i)|<1)\ll 2^uN^{5r/2}\ll N^{7r/2}. \numberthis\label{case21}\]
    
    Next, we consider $\max\{\rho_i(u_i),\rho_i(v_i)\}\ge 4N^{1/4}$ for all $1\le i\le r$. One can see that $|j_1\rho_i(u_i)-j_2\rho_i(v_i)|<1$ is equivalent to $\left|\frac{j_1}{j_2}-\frac{\rho_i(v_i)}{\rho_i(u_i)}\right|<\frac{1}{j_2\rho_i(u_i)}.$ By triangle inequality, we have
    \[\frac{\rho_i(v_i)}{\rho_i(u_i)}\le \frac{j_1}{j_2}+\frac{1}{j_2}\le 4.\]
    Similarly, we have $\frac{\rho_i(v_i)}{\rho_i(u_i)}\ge \frac{1}{4}$, and so $\frac{\rho_i(v_i)}{\rho_i(u_i)}\in [1/4,4].$ Thus, it is sufficient to consider approximately $\log N$ intervals of the form $[4N^{\beta},8N^{\beta})$ where $\beta \in [1/4,1.01]$ such that $\rho_i(v_i)\in [4N^{\beta},8N^{\beta}).$ It implies $\rho_i(u_i)\in [N^{\beta},32N^{\beta})$ and \[\sum_{2^{u-1}\le j_1,j_2<2^u}\prod_{i=1}^r\sum_{\substack{1\le u_i,v_i\le M\\\rho_i(v_i), \rho_i(u_i)\in [N^{\beta},32N^{\beta}) }}\I(|j_1\rho_i(u_i)-j_2\rho_i(v_i)|<1)\numberthis\label{case2count}\] counts more solutions. We solve this case similarly as case 1 with the restriction $\rho_i(v_i), \rho_i(u_i)\in [N^{\beta},32N^{\beta}).$ Therefore, we define 
     \[b_2(k_i):=\sum_{\substack{1\le u_i\le M\\\rho_i(u_i)\in [N^{\beta},32N^{\beta}) }}\I(\rho_i(u_i)\in[k_i,k_i+1)).\]
 Note that by Cauchy-Schwarz inequality, we have
$\sum_{k_i=1}^{\infty}b_2(k_i)\ll N^{\beta/2}\sqrt{E_N^*}.$ Set $T=2^{u/r}N^{\beta}$, and corresponding to $b_2(k_i) $, we define $a(h_i)$, $P(s_i)$ similarly as in \eqref{ahdef} and \eqref{psdef}, respectively. Also, in this case $|P(0)|^2\ll E_N^*N^{\beta}$. Following case 1, we obtain integrals $I_1$, $I_2$ and $I_3$ as in \eqref{rproduct}, and which are estimated as
    \[I_2,I_3\ll N^{1+\beta r+\epsilon/4}(E_N^*)^r,\]and 
    \[I_1\ll T^{(29+76r)/89+\epsilon}N^{13\beta r/89}(E_{N}^{*})^{r}. \]
    Substitute $T=2^{u/r}N^{\beta}$ and $\beta\le 1.01$, this gives
    \begin{align*}
        &\sum_{2^{u-1}\le j_1,j_2<2^u}\prod_{i=1}^r\sum_{\substack{1\le u_i,v_i\le M\\\rho_i(v_i), \rho_i(u_i)\in [N^{\beta},32N^{\beta}) }}\I(|j_1\rho_i(u_i)-j_2\rho_i(v_i)|<1)\\
        &\ll \frac{2^u}{T^r}(N^{1+\beta r+\epsilon/4}(E_N^*)^r+T^{(29+76r)/89+\epsilon}N^{13\beta r/89}(E_{N}^{*})^{r})\\& \ll N^{1+0.1r+\epsilon/4}(E_N^*)^r+N^{\frac{58.29+76r+\epsilon}{89}}(E_{N}^{*})^{r}.\numberthis\label{variance2}
    \end{align*}
    \subsection{Estimation of the number of solutions in case 3} For some $k\ge 1$ and  $1\le i\le k$,  assume that  $\min\{\rho_i(u_i),\rho_i(v_i)\}>N^{1.01}$, and $\min\{\rho_i(u_i),\rho_i(v_i)\}\le N^{1.01}$ for $k+1\le i\le r$ (here we are rearranging the terms according to the value of $\min\{\rho_i(u_i),\rho_i(v_i)\}$). We can write \begin{align*}
&\prod_{i=1}^r\sum_{\substack{1\le u_i,v_i\le M\\ \text{for some  $i$}\ u_i\ne v_i}}\sum_{2^{u-1}\le j_1,j_2<2^u}\I(|j_1\rho_i(u_i)-j_2\rho_i(v_i)|<1)\\&\le \prod_{i=1}^k\sum_{\substack{1\le u_i,v_i\le M\\ \text{for some  $i$}\ u_i\ne v_i}}\sum_{2^{u-1}\le j_1,j_2<2^u}\I(|j_1\rho_i(u_i)-j_2\rho_i(v_i)|<1)\\&\times\prod_{l=1}^{r-k}\sum_{\substack{1\le u_l,v_l\le M\\ \text{for some  $l$}\ u_l\ne v_l}}\sum_{2^{u-1}\le j_1,j_2<2^u}\I(|j_1\rho_l(u_l)-j_2\rho_l(v_l)|<1).
   \end{align*}
    We follow case 1 for the first sum of the above equation, and for the second sum, we follow case 2. This yields
     \begin{align*}
\prod_{i=1}^r\sum_{\substack{1\le u_i,v_i\le M\\ \text{for some  i}\ u_i\ne v_i}}\sum_{2^{u-1}\le j_1,j_2<2^u}\I(|j_1\rho_i(u_i)-j_2\rho_i(v_i)|<1)\ll N^{\frac{116+39k-280r}{89}}(E_{N}^*)^{\frac{89r-13k}{89}}.\numberthis\label{variance3}
     \end{align*}
     On substituting values from \eqref{variance1}, \eqref{variance2} and \eqref{variance3} in \eqref{varsum}, we have 
 \begin{align*}
   &\prod_{i=1}^r\sum_{\substack{1\le u_i,v_i\le M\\ \text{for some  i}\ u_i\ne v_i}}\sum_{2^{u-1}\le j_1,j_2<2^u}\I(|j_1\rho_i(u_i)-j_2\rho_i(v_i)|<1) \\
   & \ll N^{\frac{58+115r}{89}+\epsilon}(E_N^{*})^{76r/89}+N^{\frac{29+76r+29\beta+(1+\beta)\epsilon}{89}}(E_{N}^{*})^{r}+N^{\frac{116+39k+76r}{89}}(E_{N}^*)^{\frac{89r-13k}{89}}. \numberthis\label{finalvar}
 \end{align*} 
 Next, from \eqref{varadd} and \eqref{finalvar},  we have
 \begin{align*}
      Var(h_{N^r},\mu)\ll \max\left(N^{\frac{58-241r}{89}+\epsilon}(E_N^{*})^{76r/89},N^{\frac{29-280r+29\beta+(1+\beta)\epsilon}{89}}(E_{N}^{*})^{r},N^{\frac{116+39k-280r}{89}}(E_{N}^*)^{\frac{89r-13k}{89}}\right).
 \end{align*}
  This proves Lemma 
  \ref{lemma1}.  
  \end{proof}
        \section{Proof of Theorems \ref{thm1} and \ref{thm2} }\label{mainproof}
  To prove Theorems \ref{thm1} and \ref{thm2}, we show that the variance is small. In Theorem \ref{thm1}, we assume that  $E_{N}^{r}\ll N^{4r-\delta}.$ On substituting this estimate in  \eqref{varhyp1}, we have $ Var(h_{N^r},\mu)\ll N^{-\delta}$. Similarly, for Theorem \ref{thm2}, using the hypothesis that $E_{N}^{(i)*}\ll N^{\frac{280-136/r}{89}-\delta}$ as $N\to \infty $ in Lemma \ref{lemma1} gives $ Var(h_{N^r},\mu)\ll N^{-\delta'}$ for sufficiently small ${\delta'}$ for Theorem \ref{thm2}. 
    Finally, invoking Chebyshev’s inequality, the first Borel–Cantelli lemma, the expected value \eqref{expectedval}, and variance of the left side of \eqref{limcon2} obtained above, proves Theorems \ref{thm1} and \ref{thm2}. 
    

       \section{Proof of Corollary \ref{cor1}}\label{cor}
 
    From Theorem \ref{thm2}, if $r\ge 2$, there exists $\delta>0$ such that $E_{N}^{(i)*}\ll N^{2.382-\delta}$ for all $1\le i\le r$, and if we consider $r\ge 3$, then $E_{N}^{(i)*}\ll N^{2.6367-\delta}$ for all $1\le i\le r$. By Lemma 5.2 of \cite{bkw10}, if $a^{i}(x_n^{i})$  is a sequence of quadratic polynomials with real coefficients, then for any $\epsilon>0$, $E_{N}^{(i)*}\ll N^{2+\epsilon}$. A sequence $a^{i}(x_n^{i})$ is said to be a lacunary sequence, if for all $n$ \[\frac{a^{i}(x_{n+1}^{i})}{a^{i}(x_n^{i})}\ge \lambda >1.\numberthis\label{lacu}\]
    For a lacunary sequence $E_{N}^{(i)*}\ll N^{2+\epsilon}$, see \cite[Proposition 4.2]{rudtec}. Moreover, if $a^{i}(x_n^{i})$ is said to be a convex sequence, that is, for all $n$, $a^{i}(x_n^{i})-a^{i}(x_{n-1}^{i})<a^{i}(x_{n+1}^{i})-a^{i}(x_n^{i})$, then it is shown in \cite{shk13} that for a convex sequence  $E_{N}^{(i)*}\ll N^{2.46+\epsilon}$ for any $\epsilon>0$.
  
       \section{Proof of Theorem \ref{thm3} }\label{thm3proof}
To prove Theorem \ref{thm3}, we follow the same method we used to prove Theorem \ref{thm2}  with some minor changes. So, we will merely sketch the details where there are differences between the two proofs. In section \ref{expectations}, we use the growth condition \eqref{growthcon} of the sequence to compute the expectation. Since we have the same growth condition in  Theorem \ref{thm3}, we obtain the same expression for the expectation of the pair correlation function here as well. Next, for the variance, we follow section \ref{variance} till \eqref{varsum}. This leads us to the sum \[\prod_{i=1}^r\sum_{\substack{1\le u_i,v_i\le M\\ \text{for some  i}\ u_i\ne v_i}}\sum_{2^{u-1}\le j_1,j_2<2^u}\I(|j_1\rho_i(u_i)-j_2\rho_i(v_i)|<1).\numberthis\label{varsumthm3}\]
      First, suppose that $\max\{\rho_i(u_i),\rho_i(v_i)\}\le 4N^{1/4}$ for all $1\le i\le r$. We follow the same argument given in \eqref{case21} and obtain the same bound in this case. Next, for some $1/4\le \beta\le 1.01$, we consider the sum 
      \[\sum_{2^{u-1}\le j_1,j_2<2^u}\prod_{i=1}^r\sum_{\substack{1\le u_i,v_i\le M\\\rho_i(v_i), \rho_i(u_i)\in [N^{\beta},32N^{\beta}) }}\I(|j_1\rho_i(u_i)-j_2\rho_i(v_i)|<1).\numberthis\label{case2countthm3}\]
 Now, define \[b_3(k_i):=\sum_{\substack{1\le u_i\le M\\\rho_i(u_i)\in [N^{\beta},32N^{\beta}) }}\I\left(\rho_i(u_i)\in\left[\frac{k_i}{2^u},\frac{k_i+1}{2^u}\right)\right).\]
       Set $T=2^{u/r}N^{\beta}$ and corresponding to $b_3(k_i)$, define $a(h_i)$, $P(s_i)$ similarly as in \eqref{ahdef} and \eqref{psdef}, respectively. Here, we take the solution on a finer scale by assuming $\gamma=2^{-u/r}$. From Lemma \ref{lemma2}, we have
       \[\prod_{i=1}^{r}\int_{\R}|P(s_i)|^2\Phi(s_i/T)ds_i\ll T^r({E_{N,2^{-u/r}}^*})^r.\]
       We get similar Lemmas \ref{lemma3} and \ref{lemma4} assuming $\rho_i(u_i)\in \left[\frac{k_i}{2^u},\frac{k_i+1}{2^u}\right)$ for this case. Trivially, $|P(0)|\ll N^2$, but in this case, we can find a better bound for $|P(0)|$ given by
       \begin{align*}
           |P(0)|&=\sum_{h_i\ge 0}a(h_i)\le \sum_{k_i}b(k_i)=\#\{u_i:\rho_i(u_i)\in [N^{\beta},32N^{\beta})  \}\\
           &\ll N^{\beta/2}\left(\sum_{a=N^{\beta}}^{32N^{\beta}}(\#\{u_i:\rho_i(u_i)\in [a,a+1)  \})^2\right)^{1/2} \ll N^{\beta/2}\sqrt{E^{*}_{N,1}}.\numberthis\label{newpbound}
       \end{align*}
       Next, we find an analog of Lemma \ref{lemma5} for this case. From \eqref{newpbound}, we have 
       \[|I_2|,|I_3|\ll N^{r\beta/2+1+\epsilon/4}(\sqrt{E^{*}_{N,1}})^r.\numberthis\label{i23bound}\]
       Using the bound $\zeta(1/2+\iota t)\ll |t|^{1/6}$ and $K(u)\ll u^{-2}$, we have
       \[I_{1}\ll T^{r+r/3}({E_{N,2^{-u/r}}^*})^r.\numberthis\label{i1bound}\]
       From \eqref{i23bound} and \eqref{i1bound}, we obtain 
      \begin{align*}
          &\prod_{i=1}^r\sum_{\substack{1\le u_i,v_i\le M\\ \text{for some  i}\ u_i\ne v_i}}\sum_{2^{u-1}\le j_1,j_2<2^u}\I(|j_1\rho_i(u_i)-j_2\rho_i(v_i)|<1)\\&
          \ll \frac{2^u}{T^r}\left(N^{r\beta+1+\epsilon/4}({E^{*}_{N,1}})^r+T^{r+r/3}({E_{N,2^{-u/r}}^*})^r\right)\\
          &\ll \left(N^{1+\epsilon/4}({E^{*}_{N,1}})^r+2^uT^{r/3}({E_{N,2^{-u/r}}^*})^r\right)\\&\ll N^{4r-\epsilon},\numberthis\label{thminequal1}
      \end{align*}
      where the last inequality comes from \eqref{thm3bound} for some $\epsilon>0$. For  $\min\{\rho_i(u_i),\rho_i(v_i)\}>N^{1.01}$ for all $1\le i\le r$, we choose $T=2^{u/r}N^{1+\epsilon/2}$ and we can not obtain finer bound of $P|0|$, so we use $|P(0)|\ll N^2$. This yields
      \begin{align*}
          &\prod_{i=1}^r\sum_{\substack{1\le u_i,v_i\le M\\ \text{for some  i}\ u_i\ne v_i}}\sum_{2^{u-1}\le j_1,j_2<2^u}\I(|j_1\rho_i(u_i)-j_2\rho_i(v_i)|<1)\\&
          \ll \frac{2^u}{T^r}\left(N^{4r+1+\epsilon/4}+T^{r+r/3}({E_{N,2^{-u/r}}^*})^r\right)\ll \left(N^{3r+1+\epsilon/4}+T^{r/3}({E_{N,2^{-u/r}}^*})^r\right)\\ &\ll N^{4r-\epsilon},\numberthis\label{thminequal2}
      \end{align*}
      where the last inequality comes from \eqref{thm3bound} for some $\epsilon>0$. If for some $k\ge 1$ and  $1\le i\le k$,  $\min\{\rho_i(u_i),\rho_i(v_i)\}>N^{1.01}$  and $\min\{\rho_i(u_i),\rho_i(v_i)\}\le N^{1.01}$ for $k+1\le i\le r$ . Then, from  \eqref{thminequal1} and \eqref{thminequal2}, we have \begin{align*}
       &\prod_{i=1}^r\sum_{\substack{1\le u_i,v_i\le M\\ \text{for some  i}\ u_i\ne v_i}}\sum_{2^{u-1}\le j_1,j_2<2^u}\I(|j_1\rho_i(u_i)-j_2\rho_i(v_i)|<1)\\&\le \prod_{i=1}^k\sum_{\substack{1\le u_i,v_i\le M\\ \text{for some  i}\ u_i\ne v_i}}\sum_{2^{u-1}\le j_1,j_2<2^u}\I(|j_1\rho_i(u_i)-j_2\rho_i(v_i)|<1)\\&\times\prod_{l=1}^{r-k}\sum_{\substack{1\le u_l,v_l\le M\\ \text{for some  l}\ u_l\ne v_l}}\sum_{2^{u-1}\le j_1,j_2<2^u}\I(|j_1\rho_l(u_l)-j_2\rho_l(v_l)|<1)\\&\ll N^{4r-\epsilon}.
   \end{align*}
   Next, we follow the arguments in section \ref{mainproof} to obtain the required result.
\bibliographystyle{amsalpha}
\bibliography{ref} 
    \end{document}